\numberwithin{equation}{section}
\newtheorem{proposition}{Proposition}[section]
\newtheorem{lemma}[proposition]{Lemma}
\newtheorem{theorem}[proposition]{Theorem}
\theoremstyle{definition}
\newtheorem{remark}[proposition]{Remark}
\newtheorem{definition}[proposition]{Definition}
\DeclareMathOperator{\End}{End}
\DeclareMathOperator{\Id}{Id}
\DeclareMathOperator{\GL}{GL}
\DeclareMathOperator{\Aut}{Aut}
\newcommand{\dbar}{\bar\partial}
\newcommand{\scGC}{\mathscr{G}^{\mathbb{C}}}
\newcommand{\rk}{\mathrm{rank}}
\def\Int{\mathrm{Int}}
\def\R{\mathbb{R}}
\def\Q{\mathbb{Q}}
\def\N{\mathbb{N}}
\def\C{\mathbb{C}}
\def\cA{\mathcal{A}}
\def\cF{\mathcal{F}}
\def\cE{\mathcal{E}}
\def\cG{\mathcal{G}}
\def\cS{\mathcal{S}}
\def\cK{\mathcal{K}}
\def\cZ{\mathcal{Z}}
\def\cS{\mathcal{S}}
\def\cZ{\mathcal{Z}}
\def\ocZ{\overline{\mathcal{Z}}}
\def\ua{\underline{a}}
\def\Gr{\mathrm{Gr}}
\def\delb{\overline{\partial}}
\def\rank{\mathrm{rank}}
\def\trace{\mathrm{trace}}
\def\Lie{\mathrm{Lie}}
\def\k{\mathfrak{k}}
\def\g{\mathfrak{g}}
\def\aut{\mathfrak{aut}}
\def\uep{\underline{\varepsilon}}
\def\Om{\Omega}
\def\om{\omega}
\def\ep{\varepsilon}
\def\>{\rangle}
\def\<{\langle}
\def\>{\rangle}
\title[Local wall-crossing for HYM connections]{Semi-stability and local wall-crossing for Hermitian Yang--Mills connections}
\author[A. Clarke]{Andrew Clarke}
\address{Andrew Clarke, Instituto de Matem\'atica, Universidade Federal do Rio de Janeiro, Av. Athos da Silveira Ramos 149, Rio de Janeiro, RJ, 21941-909, Brazil}
\email{andrew@im.ufrj.br} 
\author[C. Tipler]{Carl Tipler}
\address{Carl Tipler, Univ Brest, UMR CNRS 6205, Laboratoire de Math\'ematiques de Bretagne Atlantique, France}
\email{Carl.Tipler@univ-brest.fr}
\subjclass[2010]{Primary: 53C07, Secondary: 53C55, 14J60}
\begin{document}

\begin{abstract}
We consider a sufficiently smooth semi-stable holomorphic vector bundle over a compact K\"ahler manifold. Assuming the automorphism group of its graded object to be abelian, we provide a semialgebraic decomposition of a neighbourhood of the polarisation in the K\"ahler cone into chambers characterising (in)stability. For a path in a stable chamber converging to the initial polarisation, we show that the associated HYM connections converge to an HYM connection on the graded object.
\end{abstract}

\maketitle 

\section{Introduction}
\label{sec:intro}
Slope stability, as introduced by Mumford for curves and generalized by Takemoto in higher dimension (\cite{Mum62,Tak72}) is a stability notion that can be used to construct and study moduli spaces of vector bundles. It depends on a choice of a polarisation, the variations of which being responsible for wall-crossing phenomena (see \cite{GrRoTo} for a survey on constructions and variations of moduli spaces of stable bundles). By the Hitchin--Kobayashi correspondence (\cite{skobayashi82,lubke83,uhlenbeckyau86,donaldson87}), a holomorphic vector bundle over a compact K\"ahler manifold is slope polystable if and only if it carries an Hermite--Einstein metric, or equivalently an hermitian Yang--Mills connection. While wall-crossing phenomena describe global variations of the moduli spaces from the algebraic point of view, our focus is on the local variations, from the analytic point of view, and inspired by \cite{DerMacSek}. While a stable bundle will carry an Hermite--Einstein metric with respect to any nearby polarisation, the situation for a semi-stable bundle is much more delicate, and this is the problem that we address in this paper. 

To state our results, let us first introduce some notations. Denote the K\"ahler cone of a compact K\"ahler manifold $X$ by $\cK_X\subset H^{1,1}(X,\R)$. Recall that a semi-stable holomorphic vector bundle $E\to (X,[\om])$ is said to be {\it sufficiently smooth} if its graded object $\Gr(E)$ is locally free. In that case, we define 
$$
\mathfrak{E}_{[\omega]}=\lbrace F\subset E \: \vert \: \mu_{[\om]}(F)=\mu_{[\om]}(E) \rbrace
$$
the set of subbundles of $E$ with same slope. Such bundles are all built out of successive extensions of the stable pieces of the graded object, and thus $\mathfrak{E}_{[\omega]}=\lbrace F_1,\ldots,F_p\rbrace$ is finite. For each $F_i\in \mathfrak{E}_{[\omega]}$, the map 
$$
\begin{array}{cccc}
 \nu_i : & \cK_X & \to & \R \\
         &  \alpha & \mapsto &\displaystyle \frac{c_1(E)\cdot \alpha^{n-1}}{\rank(E)} - \frac{c_1(F_i)\cdot \alpha^{n-1}}{\rank(F_i)}
\end{array}
$$
is polynomial in its arguments (seen as elements in the vector space $H^{1,1}(X,\R)$). We then set 
$$
\cS_s=\bigcap_{i=1}^p \lbrace \nu_i >0\rbrace , \: \cS_u=\bigcup_{i=1}^p \lbrace \nu_i<0 \rbrace \: \mathrm{ and } \: \cS_{ss}=\bigcap_{i=1}^p \lbrace \nu_i\geq 0 \rbrace,
$$
and for any ball $B([\om],R)\subset \cK_X$, we define $\cS_{s,R}=B([\om],R)\cap \cS_s$, and similarily for $\cS_{u,R}$ and $\cS_{ss,R}$. Those sets are semialgebraic by construction. 
\begin{proposition}
 \label{prop:intro}
 Let $E \to (X,[\om])$ be a simple and sufficiently smooth semi-stable holomorphic vector bundle over a compact K\"ahler manifold. Assume that the automorphism group $\Aut(\Gr(E))$ is abelian. Then, there is a ball $B([\om],R)\subset \cK_X$ in the K\"ahler cone and a semialgebraic partition $B([\om],R)=\cS_{s,R}\cup\cS_{u,R}\cup\cS_{ss,R}$, such that $E$ is stable (resp. unstable, strictly semi-stable) with respect to any $[\om']$ in $\cS_{s,R}$ (resp. in $\cS_{u,R}$, $\cS_{ss,R}$).
\end{proposition}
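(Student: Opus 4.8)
The plan is to reduce the statement to elementary properties of the finitely many polynomials $\nu_i$ and of the slope function, exploiting the finiteness of $\mathfrak{E}_{[\om]}$ and the fact that the Harder–Narasimhan/Jordan–Hölder structure of $E$ is governed by subbundles with maximal slope. First I would establish the key semicontinuity statement: for a polarisation $[\om']$ close to $[\om]$, every subsheaf $F\subset E$ that could destabilise $E$ with respect to $[\om']$ is, up to saturation, one of the bundles $F_1,\dots,F_p$. The point is that for a saturated subsheaf $F$ with $\mu_{[\om]}(F)<\mu_{[\om]}(E)$ (strict), continuity of $\mu_{[\om']}(\cdot)$ in the polarisation and discreteness of the possible values of $c_1(F)\cdot\alpha^{n-1}$ over the bounded family of subsheaves with bounded slope (Grothendieck boundedness) forces $\mu_{[\om']}(F)<\mu_{[\om']}(E)$ for $[\om']$ in a uniform ball $B([\om],R)$. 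Hence on that ball, testing (in)stability of $E$ only requires testing the finitely many $F_i\in\mathfrak{E}_{[\om]}$, i.e.\ evaluating the signs of $\nu_1,\dots,\nu_p$.

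Given this reduction, the three sets $\cS_{s,R},\cS_{u,R},\cS_{ss,R}$ are by definition $B([\om],R)$ intersected with $\bigcap\{\nu_i>0\}$, $\bigcup\{\nu_i<0\}$, and $\bigcap\{\nu_i\ge 0\}$ respectively. These are visibly semialgebraic (finite Boolean combinations of polynomial inequalities in the coordinates of $H^{1,1}(X,\R)$), and their union is all of $B([\om],R)$ since for any point either some $\nu_i<0$, or all $\nu_i\ge 0$ (and then either all $>0$ or some $=0$). They are pairwise disjoint: $\cS_{s,R}$ excludes any vanishing or negative $\nu_i$, $\cS_{u,R}$ requires some $\nu_i<0$ hence is disjoint from both others, and $\cS_{ss,R}$ in the partition must be read as $\bigcap\{\nu_i\ge 0\}\setminus\bigcap\{\nu_i>0\}$, i.e.\ the locus where all $\nu_i\ge0$ with at least one equality; I would state this normalisation explicitly. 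Then the identification of these loci with (in)stability is immediate from the previous paragraph together with the definition of $\nu_i$: $E$ is $[\om']$-stable iff $\mu_{[\om']}(F_i)<\mu_{[\om']}(E)$ for all $i$, iff $\nu_i([\om'])>0$ for all $i$; unstable iff some $\nu_i([\om'])<0$; and strictly semi-stable in the remaining case.

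The main obstacle, and the step deserving genuine care, is the \emph{uniformity} of the radius $R$: a priori different destabilising subsheaves could require shrinking the ball indefinitely. This is where one uses that the set of saturated subsheaves $F\subset E$ with $\mu_{[\om]}(F)\le\mu_{[\om]}(E)$ and $\mathrm{rank}(F)$ fixed forms a bounded family, so that $\{c_1(F)\cdot\alpha^{n-1} : F\}$ takes values in a set that, while infinite, has a strictly positive gap below $\mu_{[\om]}(E)\cdot\mathrm{rank}(F)$ for subsheaves \emph{not} in $\mathfrak{E}_{[\om]}$ — precisely because $\mathfrak{E}_{[\om]}$ collects all the subbundles achieving equality, and everything else is bounded away by boundedness plus the discreteness of intersection numbers against a fixed polarisation power (after clearing denominators, these are integers or lie in a fixed lattice). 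Combining that gap with local Lipschitz control of $\alpha\mapsto c_1(F)\cdot\alpha^{n-1}$, uniform over the bounded family, yields a single $R$ that works. I would also note that simplicity of $E$ is not strictly needed for this proposition (it matters for the analytic statements of the paper) but is harmless to keep; the abelian hypothesis on $\Aut(\Gr(E))$ likewise plays no role here and I would simply carry it along as part of the standing assumptions.
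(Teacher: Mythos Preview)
Your approach is essentially the algebraic argument the paper itself alludes to in its footnote (Grothendieck's Lemma plus the Greb--Toma bounded-chamber technique), and is \emph{genuinely different} from what the paper does. The paper proves the proposition by gauge theory: it builds a family of Kuranishi slices for $\Gr(E)$ depending on $\uep$, reduces the HYM equation to a finite-dimensional moment map $\mu_{\uep}:B\to\k_0$, shows via a local convexity theorem that $\mu_{\uep}^*(0)+\Int(\sigma_\eta)\subset\mu_{\uep}^*(\cZ)$, and then identifies the generators of the dual cone $\sigma^\vee$ with subbundles $\cF_{\ua}\in\mathfrak{E}_{[\om]}$ so that the slope inequalities $\mu_{L_{\uep}}(\cF_{\ua})<\mu_{L_{\uep}}(E)$ translate, via Chern--Weil, into $-\mu_{\uep}^*(0)\in\Int(\sigma)$. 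The payoff is that the same machinery immediately yields Theorem~\ref{theo:intro} on convergence of HYM connections; your argument is shorter and more elementary for the proposition alone, but gives no analytic information and would not help with the theorem. Your observation that simplicity and the abelian hypothesis are unused here is correct and matches the paper's own remarks.

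Two points in your execution need tightening. First, your boundedness statement is inverted: the family of saturated subsheaves $F\subset E$ with $\mu_{[\om]}(F)\leq\mu_{[\om]}(E)$ is \emph{all} saturated subsheaves (by semi-stability) and is not bounded; what Grothendieck's Lemma gives is boundedness of those with $\mu_{[\om]}(F)\geq c$ for any fixed $c$, and hence finiteness of the set of first Chern classes appearing. Second, your appeal to ``discreteness of intersection numbers\ldots after clearing denominators, these are integers or lie in a fixed lattice'' is not valid for a general K\"ahler class $[\om]$, where degrees are real numbers with no lattice structure. Fortunately you do not need it: finiteness of $\{c_1(F)\}$ from Grothendieck already gives a strictly positive gap $\delta$ between $\mu_{[\om]}(E)$ and $\max\{\mu_{[\om]}(F):F\notin\mathfrak{E}_{[\om]},\ \mu_{[\om]}(F)\geq\mu_{[\om]}(E)-1\}$, and then uniform Lipschitz control of $\alpha\mapsto c_1(F)\cdot\alpha^{n-1}$ over that finite set of Chern classes produces the uniform $R$. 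You should also say why a destabiliser for nearby $[\om']$ must already satisfy $\mu_{[\om]}(F)\geq\mu_{[\om]}(E)-1$; this uses that on a small ball every $[\om']^{n-1}$ is controlled by $[\om]^{n-1}$ (e.g.\ by writing $[\om']$ as a convex combination bounded away from the boundary of the K\"ahler cone).
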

This proposition can be obtained easily from algebraic methods\footnote{A direct and simple proof relying on Greb and Toma's techniques (\cite[Proposition 6.5]{GrTo}) together with Grothendieck's Lemma (\cite[Lemma 1.7.9]{HuLe}) was kindly communicated to us by Julius Ross.} (compare with the much more general result \cite[Theorem 6.6]{GrTo}). Our proof however relies on gauge theoretical methods, and provides information on the behaviour of the associated HYM connections when the polarisation reaches the boundary of $\cS_{s,R}$. 
\begin{theorem}
 \label{theo:intro}
 In the setting of Proposition \ref{prop:intro}, for any path $([\om_t])_{t\in (0,1]}$ in a connected component of $\cS_{s,R}$ such that $\underset{t\to 0}{\lim}[\om_t]=[\om]$, there is a path  $(A_t)_{t\in (0,1]}$ of HYM connections on $E$  with respect to $(\om_t)_{t\in (0,1]}$ such that $\underset{t\to 0}{\lim} A_t=A_0$ is a HYM connection on $\Gr(E)\to (X,[\om])$.
\end{theorem}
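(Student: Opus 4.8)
The plan is to argue gauge-theoretically: we realise the desired HYM connections as Chern connections of a degenerating family of holomorphic structures interpolating between $\Gr(E)$ and $E$, and solve the HYM equation along this family by a finite-dimensional reduction whose obstruction is governed by the very functions $\nu_i$ that cut out the stable chamber. For the setup: since $\Aut(\Gr(E))$ is abelian, $\Gr(E)=\bigoplus_{i=1}^{m}G_i$ with the $G_i$ pairwise non-isomorphic stable bundles of slope $\mu_{[\om]}(E)$; fixing HYM connections $A_{0,i}$ on $(G_i,\om)$, the direct sum $A_0=\bigoplus_i A_{0,i}$ is a HYM connection on $\Gr(E)$ with $i\Lambda_\om F_{A_0}=\lambda\,\Id$ for one constant $\lambda$ (the slopes agree). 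Working on the underlying smooth bundle $\E$ with the hermitian metric $h_0$ of $A_0$ and fixed Sobolev completions, the linearisation of the HYM operator at $A_0$ is the Laplace-type operator $L_0=i\Lambda_\om\bar\partial_{A_0}\partial_{A_0}$ on hermitian endomorphisms; its kernel, and by self-adjointness its cokernel, is the hermitian part $\mathfrak t\subset\End(\Gr(E))$ of $\aut(\Gr(E))$, which under our hypothesis is the $m$-dimensional span of the $\Id_{G_i}$. Off $\mathfrak t$, $L_0$ is invertible with bounded inverse, and this persists uniformly under the small perturbations below.

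I would then produce the degenerating family. Writing $\bar\partial_E=\bar\partial_{A_0}+\gamma$ with $\gamma$ a strictly block-triangular element of $\Omega^{0,1}(\End\Gr(E))$ (the extension data of $E$ along a Jordan--H\"older filtration), the diagonal complex gauge transformations $\sigma_{\vec a}=\exp\big(\sum_i a_i\Id_{G_i}\big)$ act on $\bar\partial_E$ by rescaling the off-diagonal blocks of $\gamma$; for $\vec a$ suitably spread out one has $\bar\partial_{\vec a}:=\sigma_{\vec a}\cdot\bar\partial_E\to\bar\partial_{A_0}$, while $(\E,\bar\partial_{\vec a})\cong E$ for every finite $\vec a$. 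Given the path, choose representatives $\om_t\to\om$ and set $\delta_t=|[\om_t]-[\om]|\to0$. For $(\vec a,t)$ with $\vec a$ in the regime $\|\gamma_{\vec a}\|=O(\delta_t^{1/2})$, the Chern connection of $(\bar\partial_{\vec a},h_0)$ is a good approximate HYM connection for $\om_t$; by the implicit function theorem applied in the complement of $\mathfrak t$ (using the uniform invertibility of the perturbed $L_0$) there is a small hermitian endomorphism $\phi=\phi(\vec a,t)$ with
\[
\Pi^{\perp}_{\mathfrak t}\Big(i\Lambda_{\om_t}F_{(\bar\partial_{\vec a},\,h_0 e^{\phi})}-\lambda_t\Id\Big)=0,
\]
$\lambda_t$ being the topological constant, leaving the finite-dimensional obstruction $\mathfrak o(\vec a,t):=\Pi_{\mathfrak t}\big(i\Lambda_{\om_t}F_{(\bar\partial_{\vec a},\,h_0 e^{\phi})}-\lambda_t\Id\big)\in\mathfrak t$.

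It then remains to kill $\mathfrak o$. Pairing $\mathfrak o(\vec a,t)$ with the orthogonal projections onto the subbundles $F_i\in\mathfrak{E}_{[\om]}$ and using the Chern--Weil identity that converts curvature integrals into degrees, one obtains an expansion with two leading contributions of size $O(\delta_t)$: a term recording the slope mismatch of the summands $G_i$ for $\om_t$, which is a non-degenerate linear combination of the $\nu_i(\om_t)$, and a term quadratic in $\gamma_{\vec a}$ (the part linear in $\gamma_{\vec a}$ is block-off-diagonal, hence invisible to $\mathfrak t$), all remaining terms being of lower order; the signs are compatible because turning on the extension raises the average block curvature of the sub-summands and lowers that of the quotient-summands, which is exactly the direction needed. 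Rescaling by $\delta_t$ reduces $\mathfrak o(\vec a,t)=0$ to an $(m-1)$-dimensional equation for the $(m-1)$ effective parameters in $\vec a$ (the trace direction is pinned by $\lambda_t$), and the strict inequalities $\nu_i(\om_t)>0$ on $\cS_{s,R}$ force this equation to have a solution $\vec a(t)$ lying in the degenerating regime --- by a degree argument, or a further application of the implicit function theorem after the rescaling. The Chern connection $A_t$ of $(\bar\partial_{\vec a(t)},h_0 e^{\phi(\vec a(t),t)})$ is then HYM for $\om_t$ and, via the holomorphic isomorphism $(\E,\bar\partial_{\vec a(t)})\cong E$, lives on $E$; since $\vec a(t)$ spreads out and $\delta_t\to0$ we get $\bar\partial_{\vec a(t)}\to\bar\partial_{A_0}$ and $\phi(\vec a(t),t)\to0$, hence $A_t\to A_0$.

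I expect the main obstacle to be this final reduction: expanding the obstruction map $\mathfrak o$ precisely enough --- its leading term identified with the $\nu_i(\om_t)$ and the error terms genuinely subleading relative to the shrinking $O(\delta_t)$ scale --- and then solving the resulting finite-dimensional equation uniformly as $t\to0$, carefully controlling the interplay between the two small parameters, namely the rate $\delta_t$ at which $[\om_t]\to[\om]$ and the degeneration rate of $\vec a(t)$, and checking that the positivity $\nu_i>0$ really drives the solution into the correct asymptotic regime. The infinite-dimensional part of the argument is, by contrast, routine once $A_0$ is fixed as the prospective limit.
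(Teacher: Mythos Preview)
Your overall architecture coincides with the paper's: fix the HYM connection $A_0$ on $\Gr(E)$, reduce the HYM equation for nearby complex structures to a finite-dimensional problem governed by the cokernel $\mathfrak t\simeq\bigoplus_i\R\cdot\Id_{G_i}$, and then try to kill the residual obstruction using the stability inequalities. The infinite-dimensional step (your implicit function theorem producing $\phi(\vec a,t)$) is exactly the paper's Proposition~3.3, and your obstruction $\mathfrak o(\vec a,t)$ is the paper's finite-dimensional moment map $\mu_{\uep}$ restricted to the $G$-orbit of the point $b$ representing $E$ in the Kuranishi slice.

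Where your proposal and the paper genuinely diverge is in how the finite-dimensional equation is solved, and this is also where your sketch has a gap. The paper does \emph{not} attack $\mathfrak o=0$ by an expansion plus a degree or implicit-function argument. Instead it recognises, via the Atiyah--Bott/Donaldson picture, that $\mu_{\uep}$ is a moment map for the torus $K_0$ acting on the slice, and invokes a local form of the Atiyah--Guillemin--Sternberg convexity theorem (their Proposition~4.1, imported from \cite{SekTip22}): there is an $\eta>0$ so that $\mu_{\uep}^*(0)+\Int(\sigma_\eta)\subset\mu_{\uep}^*(G\cdot b)$, where $\sigma=\sum_{b_{ij}\neq0}\R_+\cdot m_{ij}$ is the cone on the weights of $b$. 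Thus finding a zero reduces to the \emph{cone-membership} condition $-\mu_{\uep}^*(0)\in\Int(\sigma)$. The paper then checks this by duality: it determines the generators $v_{\underline a}$ of $\sigma^\vee$ combinatorially (Lemma~4.3), shows that each such $v_{\underline a}$ produces a genuine holomorphic subbundle $\cF_{\underline a}\in\mathfrak E_{[\omega]}$ (this is the step where one verifies $\bar\partial_E(\cF_{\underline a})\subset\Omega^{0,1}(X,\cF_{\underline a})$ using the sign of $a_i-a_j$ on the support of $\gamma$), and finally computes via Chern--Weil that $\langle\mu_{\uep}(0),v_{\underline a}\rangle_{\uep}$ equals a positive multiple of $\mu_{L_{\uep}}(\cF_{\underline a})-\mu_{L_{\uep}}(E/\cF_{\underline a})$, which is negative on $\cS_{s,R}$.

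Your proposal does not supply an argument of this strength. Two concrete issues: first, a ``degree argument'' is not adequate here because the map you are trying to invert is, in the equivariant Darboux normal form, $\vec a\mapsto\sum_{ij}e^{2(a_i-a_j)}\|\gamma_{ij}\|^2\,m_{ij}$, whose image is contained in $\Int(\sigma)$; showing it actually \emph{covers} a neighbourhood of the apex is precisely the local convexity statement, and when the number of nonzero blocks exceeds $\ell-1$ the coefficients $e^{2(a_i-a_j)}$ are multiplicatively constrained, so surjectivity onto $\Int(\sigma)$ is not automatic. Second, and more seriously, you assert that pairing with projections onto the $F_i\in\mathfrak E_{[\omega]}$ recovers the $\nu_i$ with compatible signs, but you do not establish the converse direction the argument needs: that \emph{every} face of $\sigma$ (equivalently, every generator of $\sigma^\vee$) actually comes from some $F\in\mathfrak E_{[\omega]}$. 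That is the content of the paper's Lemma~4.3 and Section~4.2.2, and without it the hypothesis ``$\nu_i>0$ for all $F_i\in\mathfrak E_{[\omega]}$'' cannot be converted into ``$-\mu_{\uep}(0)$ pairs negatively with every generator of $\sigma^\vee$''. Your remark that ``turning on the extension raises the average block curvature of the sub-summands'' is the correct heuristic for the sign of the quadratic term, but it does not replace this combinatorial identification.

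The convergence part of your sketch is fine and matches the paper: once one knows $\mu_{\uep}(b_{\uep})=0$ with $\mu_{\uep}(0)\to0$, the normal form $\mu_{\uep}^*(b')=\mu_{\uep}^*(0)+\sum\|b'_{ij}\|_{\uep}^2\,m_{ij}$ forces $b_{\uep}\to0$, hence $A_t\to A_0$.
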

In the previous statement, when referring to a HYM connection $A$ on $E$ (resp. on $\Gr(E)$), we implicitly assumed that $A^{0,1}$ was gauge equivalent to the holomorphic connection, or Dolbeault operator, of $E$ (resp. of $\Gr(E)$). Hence, the conclusion of Theorem \ref{theo:intro} can be rephrased as follows. If $\delb_{\Gr(E)}$ (resp. $\delb_E$) stands for the Dolbeault operator of $\Gr(E)$ (resp. of $E$) and if $h$ is an Hermite--Einstein metric on $\Gr(E)$ with respect to $\om$, then there are gauge transformations $(f_t)_{0<t\leq 1}$ such that the Chern connections of $(f_t^*\delb_E,h)$ are HYM with respect to $(\om_t)_{0<t\leq 1}$ and converge to the Chern connection of $(\delb_{\Gr(E)},h)$ in any Sobolev norm (see Section \ref{sec:HYM} for definitions of gauge transformations and their action). Our construction actually provides information on the HYM connections associated to {\it any} path in a connected component of $\cS_{s,R}$ converging to {\it any} class in $\cS_{ss,R}$, see Remark \ref{rem:concludingremark}.

Note that Theorem \ref{theo:intro} is not a direct application of the Hitchin--Kobayashi correspondence, nor of the implicit function theorem due to the presence of the symmetries $\Aut(\Gr(E))$ that obstruct the linear theory. Our proof instead make use of the moment map interpretation of the equations by Atiyah--Bott and Donaldson (\cite{AtBot,donaldson85}), together with a Kuranishi slice construction inspired by \cite{donaldson-largesym,gabor-deformations,BuSchu} and a local version of Atiyah and Guillemin--Sternberg's convexity theorem for images of moment maps.

\begin{remark}
 While the hypothesis on $\Gr(E)$ being locally free is natural, asking for its automorphism group to be abelian is purely technical. This is used in Section \ref{sec:proofmain}, which relies on a similar construction for cscK metrics \cite{SekTip22}. In \cite{SekTip}, similar results are obtained for perturbations of K\"ahler classes and semi-stable bundles. However, in \cite{SekTip}, an extra hypothesis on $\Gr(E)$ was required (unicity of the Jordan--Holder filtration with locally-free stable quotients), the proof was more technical, and the results only hold for perturbations along lines (of the form $t\mapsto [\om]+t\alpha$) in the K\"ahler cone.
\end{remark}
\begin{remark}
 The hypothesis on $E$ being simple can easily be dropped. In general, one might look for polystable perturbations of a semi-stable bundle. In that case, one should start from a direct sum of semi-stable bundles of the same slopes, and deal with each summands separatly. To ease the exposition, we will restrict to the simple case.
\end{remark}

It is not difficult to see that Theorem \ref{theo:intro} holds true for specific compact families of semi-stable holomorphic vector bundles that share $\Gr(E)$ as graded object (as in the similar problem for cscK metrics, see the discussion in \cite[Section 4.5]{SekTip22}). The next step towards the understanding of local wall-crossing phenomena for HYM connections is to obtain a uniform version of Theorem \ref{theo:intro} for all small deformations of $\Gr(E)$ at once.

\subsection*{Organisation of the paper:}  First, in Section \ref{sec:setup}, we recall the basics on HYM connections and slope stability. Then, in Section \ref{sec:perturbation}, we produce a family of Kuranishi slices parametrising small deformations of $\Gr(E)$, where each slice depends on a K\"ahler class near $[\om]$. This step is inspired by \cite{gabor-deformations,BuSchu,DerMacSek,SekTip22}, and enables to reduce the problem to the study of a family of finite dimensional moment maps. Section \ref{sec:proofmain} is then devoted to the proof of Theorem \ref{theo:intro}, relying on the recent technique from \cite{SekTip22} to control the perturbed moment maps.

\subsection*{Acknowledgments:} The authors benefited from visits to LMBA and Gotheborg University; they would like to thank these welcoming institutions for providing stimulating work environments. The idea of this project emerged from discussions with Lars Martin Sektnan, whom we thank for sharing his ideas and insight. We also thank Julius Ross for kindly answering our questions on the chamber structure, and pointing to us reference \cite{GrTo}. AC is partially supported by the grants {BRIDGES ANR--FAPESP ANR-21-CE40-0017} and { Projeto CAPES - PrInt UFRJ 88887.311615/2018-00}. CT is partially supported by the grants MARGE ANR-21-CE40-0011 and BRIDGES ANR--FAPESP ANR-21-CE40-0017.

\section{Preliminaries}
\label{sec:setup}
In Sections \ref{sec:HYM} and \ref{sec:slopestability} we introduce the notions of HYM connections and slope stability, together with some general results, and refer the reader to \cite{skobayashi87} and \cite{HuLe}. 

\subsection{The hermitian Yang--Mills equation}
\label{sec:HYM}

Let $E \to X$ be a holomorphic vector bundle over a compact K\"ahler manifold $X$. A hermitian metric on $E$ is \textit{Hermite--Einstein} with respect to a K\"ahler metric with K\"ahler form $\omega$ if the curvature $F_h \in \Omega^2 \left(X, \End E \right)$ of the corresponding Chern connection satisfies
\begin{align}\label{HEeqn} \Lambda_{\om} \left( iF_h \right) = c \Id_E
\end{align}
for some real constant $c$. Equivalently, if $h$ is some hermitian metric on the smooth complex vector bundle underlying $E$, a hermitian connection $A$ on $(E,h)$ is said to be \textit{hermitian Yang--Mills} if it satisfies
\begin{equation*}
\left\{
\begin{array}{ccc}
F_A^{0,2} & = & 0, \\
\Lambda_{\om} \left( i F_A \right) & = & c \Id_E.
\end{array}
\right.
\end{equation*}
The first equation of this system implies that the $(0,1)$-part of $A$ determines a holomorphic structure on $E$, while the second that $h$ is Hermite--Einstein for this holomorphic structure. We will try to find hermitian Yang--Mills connections within the complex gauge group orbit, which we now define. The \textit{complex gauge group} is 
$$ 
\scGC(E) = \Gamma \left( \GL \left(E, \mathbb{C} \right) \right),
$$
and its {\it hermitian} version is
$$ 
\scGC(E,h) = \Gamma \left( \GL \left(E, \mathbb{C} \right) \right)\cap \Gamma \left( \End_H (E, h) \right),
$$
where $\End_H (E,h)$ stands for the hermitian endomorphisms of $(E,h)$.
Note that if $\dbar$ is the Dolbeault operator defining the holomorphic structure on $E$, then $ f \circ \dbar \circ f^{-1}$ defines a biholomorphic complex structure on $E$. Let $d_A = \partial_A + \dbar_A$ be the Chern connection of $(E,h)$ with respect to the original complex structure (that is $\dbar_A = \dbar$). Then the Chern connection $A^f$ of $h$ with respect to $f \circ \dbar \circ f^{-1}$ is 
$$
d_{A^f} = (f^*)^{-1} \circ \partial_A \circ (f^*) + f \circ \dbar \circ f^{-1}.
$$
Solving the hermitian Yang--Mills equation is equivalent to solving $$\Psi (s) = c \Id_E$$
where 
$$
\begin{array}{cccc}
\Psi :  & \Lie(\scGC(E,h)) &  \longrightarrow  & \Lie(\scGC(E,h))\\
 & s & \longmapsto & i\Lambda_\om(F_{A^{\exp(s)}}),
\end{array}
$$
and where $\Lie(\scGC(E,h)):=i\Gamma(\End_H(E,h))$ is the tangent space to $\scGC(E,h)$ at the identity. For a connection $A$ on $E$, the Laplace operator $\Delta_{A}$ is
\begin{align}\label{laplaceop}  \Delta_A =  i \Lambda_{\omega} \left( \bar \partial_{A}  \partial_{A} -  \partial_{A} \bar \partial_{A} \right).
\end{align}
If $A_{\End E}$ denote the connection induced by $A$ on $\End E$, then :
\begin{lemma}\label{lem:linop} 
If $A$ is the Chern connection of $(E,\delb,h)$, the differential of $\Psi$  at identity is $$ d \Psi_{\Id_E} = \Delta_{A_{\End E}}.$$
 If moreover $A$ is assumed to be hermitian Yang--Mills, then the kernel of $ \Delta_{A_{\End E}}$ acting on $\Gamma(\End(E))$ is given by the Lie algebra $\aut(E)$ of the space of automorphisms $\Aut(E)$ of $(E,\delb)$.
\end{lemma}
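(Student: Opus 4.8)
The plan is to handle the two assertions separately, using a direct linearisation for the first and a Bochner-type vanishing for the second. For the differential, I would write $f_t = \exp(ts)$ with $s \in \Lie(\scGC(E,h))$ and differentiate $t \mapsto \Psi(ts) = i\Lambda_\om(F_{A^{f_t}})$ at $t=0$, using the transformation rule for $d_{A^f}$ recalled just before the statement. Splitting into types, the connection form of $A^{f_t}$ relative to $A$ is $(f_t^*)^{-1}\partial_{A_{\End E}}(f_t^*) + f_t\,\dbar_{A_{\End E}}(f_t^{-1})$, whose derivative at $t=0$ equals $\partial_{A_{\End E}}s - \dbar_{A_{\End E}}s$ (the derivative of $f_t^*$ at $t=0$ being $s$, since $s$ is $h$-hermitian), so the linearised curvature is $d_{A_{\End E}}(\partial_{A_{\End E}}s - \dbar_{A_{\End E}}s)$. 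Since $A$ is the Chern connection of the holomorphic bundle $(E,\dbar)$ one has $F_A^{2,0} = F_A^{0,2} = 0$, hence $\partial_{A_{\End E}}^2 = \dbar_{A_{\End E}}^2 = 0$ on sections; expanding $d_{A_{\End E}} = \partial_{A_{\End E}} + \dbar_{A_{\End E}}$, the surviving terms give $\tfrac{d}{dt}\big|_{t=0} F_{A^{f_t}} = (\dbar_{A_{\End E}}\partial_{A_{\End E}} - \partial_{A_{\End E}}\dbar_{A_{\End E}})s$, and applying $i\Lambda_\om$ and comparing with \eqref{laplaceop} yields $d\Psi_{\Id_E} = \Delta_{A_{\End E}}$.

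For the kernel, I would first use the K\"ahler identities $[\Lambda_\om,\dbar_{A_{\End E}}] = -i\partial_{A_{\End E}}^*$ and $[\Lambda_\om,\partial_{A_{\End E}}] = i\dbar_{A_{\End E}}^*$: applied to a section $s$ of $\End E$, for which $\Lambda_\om\partial_{A_{\End E}}s$ and $\Lambda_\om\dbar_{A_{\End E}}s$ vanish for type reasons, they give $\partial_{A_{\End E}}^*\partial_{A_{\End E}}s = i\Lambda_\om\dbar_{A_{\End E}}\partial_{A_{\End E}}s$ and $\dbar_{A_{\End E}}^*\dbar_{A_{\End E}}s = -i\Lambda_\om\partial_{A_{\End E}}\dbar_{A_{\End E}}s$, so that $\Delta_{A_{\End E}} = \partial_{A_{\End E}}^*\partial_{A_{\End E}} + \dbar_{A_{\End E}}^*\dbar_{A_{\End E}}$ on $\Gamma(\End E)$. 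In particular $\Delta_{A_{\End E}}$ is non-negative with $\ker\Delta_{A_{\End E}} = \ker\partial_{A_{\End E}} \cap \ker\dbar_{A_{\End E}}$, while $\ker\dbar_{A_{\End E}} = H^0(X,\End E)$, which is exactly $\aut(E)$ since $\Aut(E)$ is the open subset of invertible elements there. It then remains to prove the inclusion $\ker\dbar_{A_{\End E}} \subseteq \ker\partial_{A_{\End E}}$, and this is where the hermitian Yang--Mills hypothesis enters: from $\Lambda_\om(iF_A) = c\,\Id_E$ one obtains $\Lambda_\om(iF_{A_{\End E}}) = \ad(\Lambda_\om(iF_A)) = \ad(c\,\Id_E) = 0$, so $(\End E,A_{\End E})$ is hermitian Yang--Mills with vanishing Einstein constant. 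For $s$ with $\dbar_{A_{\End E}}s = 0$, the standard Chern-connection computation then gives $i\partial\dbar\,|s|_h^2\wedge\om^{n-1}$ equal, up to a fixed positive constant, to $|\partial_{A_{\End E}}s|_h^2\,\om^n$ — the curvature contribution being $\langle\ad(\Lambda_\om(iF_A))s,s\rangle = 0$ as $\Id_E$ is central — so that integrating over the compact manifold $X$ and applying Stokes forces $\partial_{A_{\End E}}s = 0$. Hence $\ker\Delta_{A_{\End E}} = \ker\dbar_{A_{\End E}} = \aut(E)$.

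I expect no genuine obstacle: the first part is a bookkeeping computation whose only delicate point is correctly tracking the $h$-adjoint $f^*$ in the transformation rule for $A^f$ (so that the $(1,0)$-part of the linearised connection is $\partial_{A_{\End E}}s$ and not, say, $\partial_{A_{\End E}}s^*$), and the second part is Kobayashi's vanishing theorem for hermitian Yang--Mills bundles applied to $\End E$. The one step that deserves care is the sign in the K\"ahler-identity/Weitzenb\"ock computation, making sure the curvature term appearing there is precisely $\ad(\Lambda_\om(iF_A))$, which then vanishes thanks to the Einstein condition; everything else is routine.
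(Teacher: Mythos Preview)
Your proposal is correct and follows essentially the same route as the paper. The paper gives no argument for the first assertion, and for the kernel statement it simply invokes the K\"ahler identities and the Akizuki--Nakano identity to write $\Delta_{A_{\End E}} = \partial_A^*\partial_A + \dbar_A^*\dbar_A$ with the two summands \emph{equal} under the HYM hypothesis, so that $\ker\Delta_{A_{\End E}} = \ker\dbar_A = \aut(E)$ immediately; your Bochner/Kobayashi-vanishing step showing $\ker\dbar_{A_{\End E}} \subseteq \ker\partial_{A_{\End E}}$ is just an explicit unpacking of that same equality of Laplacians.
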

The last statement about the kernel follows from the K\"ahler identities and the Akizuki-Nakano identity that imply $\Delta_{A_{\End E}}=\partial^*_{A}\partial_{A}+\bar\partial_{A}^*\bar\partial_{A}$, the two terms of which are equal if $A$ is Hermitian Yang-Mills. The operator $\Delta_{A_{\End E}}$ being elliptic and self-adjoint, $\aut(E)$ will then appear as a cokernel in the linear theory for perturbations of hermitian Yang--Mills connections.

\subsection{Slope stability}
\label{sec:slopestability}
We recall some basic facts about slope stability, as introduced by \cite{Mum62,Tak72}, and refer the interested reader to \cite{HuLe} for a detailed treatment. We denote here $L:=[\om]$ the polarisation of the $n$-dimensional K\"ahler manifold $X$.
\begin{definition}
\label{def:stability}
For $\cE$ a torsion-free coherent sheaf on $X$, the slope $\mu_L(\cE)\in\Q$ (with respect to $L$) is given by the intersection formula
 \begin{equation}
  \label{eq:slope}
  \mu_L(\cE)=\frac{\deg_L(\cE)}{\rank(\cE)},
 \end{equation}
 where $\rank(\cE)$ denotes the rank of $\cE$ while $\deg_L(\cE)=c_1(\cE)\cdot L^{n-1}$ stands for its degree. Then, $\mathcal{E}$ is said to be \emph{slope semi-stable} (resp. \emph{slope stable}) with respect to $L$ if for any coherent and saturated subsheaf $\mathcal{F}$ of $\mathcal{E}$ with $0<\rank( \cF)<\rank(\cE)$, one has
\begin{eqnarray*}
\mu_L(\mathcal{F})\leq \mu_L(\mathcal{E}) \:\textrm{( resp. } \mu_L(\mathcal{F})< \mu_L(\mathcal{E}) ).
\end{eqnarray*}
A direct sum of slope stable sheaves of the same slope
 is said to be \emph{slope polystable}.
\end{definition}
In this paper, we will often omit ``slope'' and simply refer to stability of a sheaf, the polarisation being implicit. We will make the standard identification of a holomorphic vector bundle $E$ with its sheaf of sections, and thus talk about slope stability notions for vector bundles as well. In that case slope stability relates nicely to differential geometry via the Hitchin--Kobayashi correspondence :
\begin{theorem}[\cite{skobayashi82,lubke83,uhlenbeckyau86,donaldson87}]
\label{thm:HKcorrespondence}
There exists a Hermite--Einstein metric on $E$ with respect to $\omega$ if and only if $E$ is polystable with respect to $L$
\end{theorem}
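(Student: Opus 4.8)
The statement to be proved is the Hitchin--Kobayashi correspondence, and the plan has the usual two halves. For the easy implication, suppose $h$ is Hermite--Einstein, $i\Lambda_\om F_h=c\,\Id_E$, and let $\cF\subset E$ be a coherent saturated subsheaf with $0<\rank(\cF)<\rank(E)$. Saturatedness makes $E/\cF$ torsion-free, so $\cF$ is a genuine holomorphic subbundle of $E$ away from an analytic set $Z$ with $\codim Z\geq 2$; over $X\setminus Z$ let $\pi$ be the $h$-orthogonal projection onto $\cF$, which is idempotent, self-adjoint, and satisfies $(\Id_E-\pi)\circ\delb\pi=0$. The Gauss--Codazzi/Chern--Weil identity for the induced metric on $\cF$ then reads, with a constant $c_n>0$ depending only on $n$,
$$
\deg_L(\cF)=c_n\left(\int_X\tr\!\big(\pi\cdot i\Lambda_\om F_h\big)\,\frac{\om^n}{n!}-\int_X|\delb\pi|^2\,\frac{\om^n}{n!}\right),
$$
the integrals converging because $\pi$ extends across $Z$ as an $L^2_1$ section. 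Substituting $i\Lambda_\om F_h=c\,\Id_E$ and $\tr\pi=\rank(\cF)$ turns this into $\mu_L(\cF)\leq\mu_L(E)$, with equality precisely when $\delb\pi\equiv 0$, i.e.\ when $\pi$ is holomorphic and $E=\cF\oplus\ker\pi$ splits holomorphically and $h$-orthogonally into two Hermite--Einstein bundles of slope $\mu_L(E)$. An induction on the rank then upgrades semistability to polystability.

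For the converse I would first reduce to the stable case: a polystable bundle is an orthogonal direct sum of stable bundles of a common slope, and a direct sum of Hermite--Einstein metrics with the same Einstein constant is again Hermite--Einstein, so it suffices to produce the metric assuming $E$ stable. Fixing a background metric $h_0$, the problem becomes, in the notation of Section \ref{sec:HYM}, to solve $\Psi(s)=c\,\Id_E$ for $s\in\Lie(\scGC(E,h_0))$. I would use the Uhlenbeck--Yau continuity method: for $\epsilon\in(0,1]$ solve the perturbed equation $\Psi(s_\epsilon)+\epsilon\,s_\epsilon=c\,\Id_E$. Solvability at $\epsilon=1$ is elementary (the zeroth-order term makes the associated Donaldson-type functional strictly convex and proper), solvability is open in $\epsilon$ because the perturbation is tailored so that the linearisation at a solution is invertible (at $\epsilon=1,\ s=0$ this linearisation is $\Delta_{A_{\End E}}+\Id$, compare Lemma \ref{lem:linop}), and the real issue is closedness as $\epsilon\to 0$. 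Equivalently one could run Donaldson's heat flow $\partial_t h_t=-(i\Lambda_\om F_{h_t}-c\,\Id_E)h_t$, whose long-time existence is standard parabolic theory; the difficulty is again the behaviour as $t\to\infty$.

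The heart of the proof, and the step I expect to be the main obstacle, is the a priori bound: one must show $\sup_X|s_\epsilon|$ stays bounded as $\epsilon\to 0$ (resp.\ $\sup_X|\log(h_0^{-1}h_t)|$ as $t\to\infty$). Given such a bound, elliptic bootstrapping in the perturbed equation extracts a smooth limit $h$ solving the Hermite--Einstein equation, and we are done. If the bound fails, rescale $u_\epsilon=s_\epsilon/\sup_X|s_\epsilon|$; a Moser iteration together with the integral identity controlling $\int_X|\delb u_\epsilon|^2$ yields a nonzero weak limit $u_\infty\in L^2_1$ whose eigenvalues are a.e.\ constant and whose spectral projections are \emph{weakly holomorphic subbundles} --- idempotent $L^2_1$ sections $\pi$ with $(\Id_E-\pi)\circ\delb\pi=0$. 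The Uhlenbeck--Yau regularity theorem then promotes such a $\pi$ to the orthogonal projection onto a coherent saturated subsheaf $\cF\subsetneq E$, and the same Chern--Weil identity as above, applied to this $\cF$, forces $\mu_L(\cF)\geq\mu_L(E)$, contradicting stability. Thus the bound holds and the limit metric exists. The two places where all the analytic weight sits are (i) this uniform $C^0$/Moser estimate along the path or flow, and (ii) the Uhlenbeck--Yau regularity result turning an $L^2_1$ weakly holomorphic subbundle into a coherent subsheaf; everything else is Chern--Weil bookkeeping and the linear algebra of Lemma \ref{lem:linop}.
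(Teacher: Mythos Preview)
The paper does not give its own proof of Theorem~\ref{thm:HKcorrespondence}; it is stated as a classical result and attributed to the cited references. So there is no paper proof to compare against. Your outline is an accurate sketch of the standard argument from those references: the Chern--Weil identity with the second fundamental form for the easy direction (this is Kobayashi and L\"ubke), and the Uhlenbeck--Yau continuity method together with the regularity theorem for weakly holomorphic subbundles for the hard direction.

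Two small comments on precision. First, for openness you invoke invertibility of the linearisation ``at a solution'' but then compute it at $(\epsilon,s)=(1,0)$; what is actually needed is that the linearisation of $s\mapsto\Psi(s)+\epsilon s$ at the solution $s_\epsilon$ is invertible for every $\epsilon>0$, which holds because the relevant operator is of the form (Laplacian of the new Chern connection)${}+\epsilon\,\Id$ and hence strictly positive. Second, the starting point of the continuity path is not usually argued via properness of a functional; one more commonly shows directly (by a maximum-principle or a simpler continuity argument) that the perturbed equation is solvable for large $\epsilon$. Neither point is a genuine gap; the overall strategy and the identification of the two hard analytic inputs (the $C^0$ estimate along the path, and the Uhlenbeck--Yau regularity theorem) are correct.
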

We will be mostly interested in semi-stable vector bundles. A \emph{Jordan--H\"older filtration} for a torsion-free sheaf $\cE$ is a filtration by coherent and saturated subsheaves:
 \begin{align}
\label{eq:JHfiltration}
0 = \cF_0 \subset \cF_1 \subset \hdots \subset \cF_\ell = \cE,
\end{align}
such that the corresponding quotients,
\begin{align}
\label{eq:JHquots}
\cG_i = \frac{\cF_i}{\cF_{i-1}},
\end{align}
 for $i=1, \hdots, \ell$, are stable with slope $\mu_L(\cG_i)=\mu_L(\cE)$.
 In particular, the graded object of this filtration
 \begin{equation}
  \label{eq:JHgraded}
  \Gr(\cE):=\bigoplus_{i=1}^l \cG_i
 \end{equation}
is polystable. 
 
\section{An adapted family of Kuranishi slices}
\label{sec:perturbation}
Let $(X,[\om])$ be a $n$-dimensional compact K\"ahler manifold, and let $([\alpha_i])_{1\leq i\leq p}$ be a basis of the real vector space $H^{1,1}(X,\C)\cap H^{2}(X,\R)$. For $\uep=(\ep_1,\ldots, \ep_p)\in\R^p$, we define 
$$
\om_{\uep}:=\om+\sum_{i=1}^p \ep_i \alpha_i \in\Om^{1,1}(X,\R).
$$
We fix a small neighbourhood $U$ of zero in $\R^p$ such that for any $\uep\in U$, $[\om_{\uep}]$ defines a K\"ahler class on $X$. When considering slopes, we will use the notation $L_{\uep}:=[\om_{\uep}]$.

Let $\Gr(E)=\bigoplus_{i=1}^\ell \cG_i$ be a polystable holomorphic vector bundle over $(X,[\om])$ with stable components $\cG_i$ (this will be the graded object of some semi-stable bundle later on). Thanks to the Hitchin--Kobayashi correspondence, we can fix an Hermite--Einstein metric $h_0$ on $\Gr(E)$. We denote by $\delb_0$ the holomorphic connection on $(\Gr(E),h_0)$ and by $A_0$ the associated (HYM) Chern connection. We will be interested in HYM connections on small deformations of $\Gr(E)$, for the perturbed polarisations $([\om_{\uep}])_{\uep\in U}$. The goal of this section is to reduce the problem of finding a zero for the operator $s\mapsto i\Lambda_{\om_{\uep}}(F_{A_0^{\exp(s)}})-c_\varepsilon\mathrm{Id}$ in a gauge group orbit to a finite dimensional problem. Note that we don't need the asumption on $\Aut(\Gr(E))$ being abelian in this section.

\subsection{Perturbing Kuranishi's slice}
\label{sec:perturbslice}
 The automorphism group $G:=\Aut(\Gr(E))$ is a reductive Lie group with Lie algebra $\g:=\aut(\Gr(E))$ and compact form $K\subset G$, with $\k:=\Lie(K)$. 
 
 Our starting point will be the following proposition, whose proof follows as in \cite{kuranishi} (see also \cite{BuSchu,doan} for a detailed treatment). We introduce the notation 
$$
V:=H^{0,1}(X,\End(\Gr(E)))
$$
for the space of harmonic $(0,1)$-forms with values in $\Gr(E)$, where the metrics used to compute adjoints are $\om$ on $X$ and $h_0$ on $\Gr(E)$. Note that the $G$-action on $\Gr(E)$ induces a linear representation $G \to \mathrm{GL}(V)$. 
\begin{proposition}
 \label{prop:firstslice}
 There exists a holomorphic $K$-equivariant map 
 $$
 \Phi : B \to \Om^{0,1}(X,\End(\Gr(E)))
 $$
 from a ball around the origin $B\subset V$ such that :
 \begin{enumerate}
  \item $\Phi(0)=0$;
  \item $Z:=\lbrace b\in B\,\vert\: (\delb_0+\Phi(b))^2=0\rbrace$ is a complex subspace of $B$;
  \item if $(b,b')\in Z^2$ lie in the same $G$-orbit, then $\delb_0+\Phi(b)$ and $\delb_0+\Phi(b')$ induce isomorphic holomorphic bundle structures;
  \item The $\scGC(\Gr(E))$-orbit of any small complex deformation of $\Gr(E)$ intersects $\Phi(Z)$.
 \end{enumerate}
\end{proposition}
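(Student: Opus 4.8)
The plan is to follow Kuranishi's original construction of a slice for the action of the complex gauge group on the space of holomorphic structures, adapted to keep track of the residual compact group $K$ and the reductive group $G=\Aut(\Gr(E))$. First I would set up the elliptic complex associated to $\delb_0$: the deformation complex $\Omega^{0,\bullet}(X,\End(\Gr(E)))$ with differential $\delb_0$, whose first cohomology is $V=H^{0,1}(X,\End(\Gr(E)))$ by Hodge theory (using $\om$ on $X$ and $h_0$ on $\Gr(E)$ to define the adjoints $\delb_0^*$ and the Green operator $G_0$). Since $A_0$ is the Chern connection of an Hermite--Einstein metric, Lemma \ref{lem:linop} identifies $\ker \Delta_{A_{0,\End}}$ on $\Omega^0$ with $\g=\aut(\Gr(E))$, so the zeroth cohomology is exactly the Lie algebra of $G$; this is what makes the slice only a \emph{pre-slice} (item (4) asserts that $\scGC$-orbits of nearby structures \emph{meet} $\Phi(Z)$, not that they meet it in a single $G$-orbit, which is the content of item (3)).

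The key construction is the Kuranishi map. For $b\in V$ near $0$, I would solve the fixed-point equation
$$
\Phi(b) = b + \tfrac12\, \delb_0^* G_0\, [\Phi(b)\wedge\Phi(b)]
$$
by the implicit function theorem (or Banach contraction) in suitable Sobolev or Hölder completions; this yields a smooth (indeed real-analytic, and in fact holomorphic in $b$ since the quadratic term $[\cdot\wedge\cdot]$ is complex bilinear and $\delb_0^*G_0$ is complex linear) map $\Phi$ with $\Phi(0)=0$ and $\delb_0^*\Phi(b)=0$ for all $b$. The standard computation shows that the integrability condition $(\delb_0+\Phi(b))^2=0$, i.e. $\delb_0\Phi(b)+\tfrac12[\Phi(b)\wedge\Phi(b)]=0$, is equivalent (using $\delb_0^*\Phi(b)=0$ and the Kähler identity $\Delta_{\delb_0}=\delb_0\delb_0^*+\delb_0^*\delb_0$) to the finite-dimensional equation $H_0[\Phi(b)\wedge\Phi(b)]=0$, where $H_0$ is the harmonic projection onto $V$; hence $Z=\{b\in B: H_0[\Phi(b)\wedge\Phi(b)]=0\}$ is cut out by finitely many holomorphic equations, giving item (2). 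Equivariance is obtained by noting that $G$ acts on everything in sight — on $\End(\Gr(E))$ by conjugation, hence on $V$, on the Green operator, on $\delb_0^*$ — compatibly, because $h_0$ is $K$-invariant (the Hermite--Einstein metric on a polystable bundle can be chosen $K$-invariant, $K$ being the isometry group of the stable summands) so $\delb_0^*$ and $G_0$ commute with the $K$-action; by uniqueness of the fixed point, $\Phi$ is $K$-equivariant. Item (3) follows since $b,b'\in Z$ in the same $G$-orbit give $\delb_0+\Phi(b)$ and $\delb_0+\Phi(b')$ related by an element of $G\subset\scGC(\Gr(E))$ — here one uses that $\Phi$ intertwines the $G$-action with gauge transformations, which on the image is conjugation by the corresponding automorphism. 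Item (4) is the usual slice property: any Dolbeault operator $\delb_0+a$ with $a$ small can be moved by a small complex gauge transformation to one in $\ker\delb_0^*$, and such operators are exactly those in the image of $\Phi$ near $0$ by the inverse function theorem applied to $b\mapsto\Phi(b)$ restricted to the linear slice.

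The main obstacle, and the point requiring the most care, is the equivariance under the full \emph{complex} group $G$ rather than just its compact form $K$: the operators $\delb_0^*$ and $G_0$ are built from the metric $h_0$ and are only manifestly $K$-equivariant, not $G$-equivariant, so $\Phi$ itself is only $K$-equivariant (as stated), and one must argue separately — via item (3) and the fact that $G$ is the complexification of $K$ — that $G$-orbits in $Z$ nonetheless correspond to isomorphic holomorphic structures. This is where the reductivity of $G$ (equivalently, polystability of $\Gr(E)$ via Theorem \ref{thm:HKcorrespondence}) is essential. A secondary technical point is verifying that the ball $B$ can be chosen so that all four properties hold simultaneously and uniformly, and that the various functional-analytic completions are compatible with elliptic regularity so that $\Phi$ lands in smooth forms $\Omega^{0,1}(X,\End(\Gr(E)))$; these are routine given the elliptic theory but must be stated. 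Since the proposition is explicitly attributed to the arguments of \cite{kuranishi,BuSchu,doan}, I would ultimately present the proof as a careful transcription of those references into the present notation, emphasizing only the equivariance bookkeeping and the identification of $Z$ as a complex subspace.
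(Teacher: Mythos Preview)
Your proposal is correct and matches the paper's approach: the paper does not give an independent proof of this proposition but simply states that it ``follows as in \cite{kuranishi} (see also \cite{BuSchu,doan} for a detailed treatment)'', and your sketch is a faithful account of precisely that Kuranishi fixed-point construction together with the equivariance bookkeeping. The only cosmetic point is a sign in the fixed-point equation (the standard derivation from the Maurer--Cartan equation gives $\Phi(b)=b-\tfrac12\,\delb_0^*G_0[\Phi(b)\wedge\Phi(b)]$), but this does not affect the argument.
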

 The space $Z$ corresponds to the space of integrable Dolbeault operators in the image of $\Phi$, and $\Phi(B)$ is a {\it slice} for the gauge group action on the set of Dolbeault operators nearby $\delb_0$. 
 
 The next step will be to perturb $\Phi$. The ideas here go back to \cite{donaldson-largesym,gabor-deformations}, and our framework will be that of \cite{BuSchu}. The strategy to do this in family with respect to the parameter $\uep$ was inspired by \cite{DerMacSek,SekTip22}.
 
 Given the metrics $\om$ and $h_0$, together with the covariant derivatives given by $A_0$, we can introduce $L^{2,l}$ Sobolev norms on spaces of sections. We will denote by $\cE^l$ the $L^{2,l}$ Sobolev completion of any space of sections $\cE$. In what follows, $l\in\N^*$ will be assumed large enough for elements in $\cE^l$ to admit as much regularity as required.
 
\begin{proposition}
 \label{prop:perturbingslice}
 Up to shrinking $U\times B$, there is a  continuously differentiable map
 $$
 \widetilde\Phi : U\times B \to \Om^{0,1}(X,\End(\Gr(E)))^l
 $$
 such that if $ A_{\uep,b}$ is the Chern connection of $(\delb_0+ \widetilde \Phi(\uep,b),h_0)$:
 \begin{enumerate}
  \item for all $(\uep,b)\in U\times Z$, $\delb_0+\widetilde \Phi(\uep,b)$ and $\delb_0+\Phi(b)$ are gauge equivalent,
  \item for all $\uep\in U$, the map  $b\mapsto\Lambda_{\om_{\uep}} i F_{ A_{\uep,b}}$ takes values in $\k$.
 \end{enumerate}
\end{proposition}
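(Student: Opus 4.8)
The idea is to correct the Kuranishi slice $\Phi$ from Proposition \ref{prop:firstslice} by an $\uep$-dependent complex gauge transformation so that the Hermite--Einstein ``error'' $\Lambda_{\om_\uep} iF$ lands in the finite-dimensional space $\k$. Concretely, I would look for $\widetilde\Phi(\uep,b)$ of the form $f\cdot(\delb_0+\Phi(b))$ where $f=\exp(\xi)$ for some $\xi=\xi(\uep,b)$ taking values in $i\Gamma(\End_H(\Gr(E),h_0))^l$, i.e. in $\Lie(\scGC(\Gr(E),h_0))$, normalised so that $\xi$ is $L^2$-orthogonal to $\k$ (so that when $\uep=0$ and $b\in Z$ the correction is trivial, giving (1)). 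The condition to impose is
$$
P^\perp_\k\left(\Lambda_{\om_\uep} iF_{A_{\uep,b}}-c_\uep \Id\right)=0,
$$
where $P^\perp_\k$ is the $L^2$-projection onto the orthogonal complement of $\k$ in $i\Gamma(\End_H(\Gr(E),h_0))^l$ (here $\k\subset i\Gamma(\End_H)$ via the HE metric $h_0$, since elements of $\k$ are $h_0$-skew-hermitian, hence $i$ times them are hermitian), and $c_\uep$ is the topological constant for $\Gr(E)$ with respect to $\om_\uep$. Since the $\k$-component is automatically handled once we project onto $\k^\perp$, statement (2) will follow: $\Lambda_{\om_\uep}iF_{A_{\uep,b}}-c_\uep\Id$ will then lie in $\k$, and since $c_\uep\Id$ is central it lies in $\k$ too (or one absorbs it), giving that $\Lambda_{\om_\uep}iF_{A_{\uep,b}}$ itself lies in $\k+\R\Id\subseteq\k$ — here one should be slightly careful about whether $\Id\in\k$, but since $\Gr(E)$ is polystable $\Id$ is in the Lie algebra of its automorphism group, so $\Id\in\k$ after the standard identification.

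The mechanism is the implicit function theorem applied to the map
$$
G:\ U\times B\times \left(\k^\perp\right)^l\ \longrightarrow\ \left(\k^\perp\right)^{l-2},\qquad
G(\uep,b,\xi)=P^\perp_\k\left(\Lambda_{\om_\uep} iF_{A_{\uep,b,\xi}}-c_\uep\Id\right),
$$
where $A_{\uep,b,\xi}$ is the Chern connection of $(\exp(\xi)\cdot(\delb_0+\Phi(b)),h_0)$. At the point $(\uep,b,\xi)=(0,0,0)$ we have $A_{0,0,0}=A_0$, the HYM connection on $\Gr(E)$, so $G(0,0,0)=0$. The partial derivative $\partial_\xi G$ at this point is, by Lemma \ref{lem:linop}, the composition $P^\perp_\k\circ\Delta_{A_{0,\End}}\circ\iota$ restricted to $\k^\perp$. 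Since $A_0$ is HYM, the kernel of $\Delta_{A_{0,\End}}$ on $\Gamma(\End(\Gr(E)))$ is $\aut(\Gr(E))=\g$, whose hermitian part (in the $i\Gamma(\End_H)$ picture) is exactly $\k$; thus $\Delta_{A_{0,\End}}$ is an isomorphism from $(\k^\perp)^l$ onto $(\k^\perp)^{l-2}$ (ellipticity plus the cokernel being $\k$, which $P^\perp_\k$ kills). Hence $\partial_\xi G|_{(0,0,0)}$ is an isomorphism, and the implicit function theorem (in the Sobolev/Banach setting, using that $G$ is smooth in $\xi$, holomorphic — hence smooth — in $b$ via $\Phi$, and $C^1$ in $\uep$ because $\om_\uep$ depends smoothly on $\uep$ and $c_\uep$ polynomially) yields a unique $\xi=\xi(\uep,b)$, $C^1$ in $(\uep,b)$, solving $G(\uep,b,\xi)=0$ near the origin. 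Setting $\widetilde\Phi(\uep,b):=\exp(\xi(\uep,b))\cdot(\delb_0+\Phi(b))-\delb_0$ gives the desired map after shrinking $U\times B$.

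For statement (1): when $\uep=0$ and $b\in Z$, the operator $\delb_0+\Phi(b)$ is already integrable, and I claim $\xi(0,b)=0$. Indeed, for $b\in Z$ the bundle $(\Gr(E),\delb_0+\Phi(b))$ is a small deformation of a polystable bundle; but we do not yet know it carries an HE metric, so one cannot immediately conclude $\xi(0,b)=0$. The correct statement is weaker: by construction $\widetilde\Phi(0,b)$ is $\scGC$-gauge-equivalent to $\delb_0+\Phi(b)$ (they differ by $\exp(\xi(0,b))\in\scGC(\Gr(E))$), which is exactly (1) as stated — no vanishing of $\xi$ is claimed, only gauge equivalence, which holds tautologically. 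So (1) is immediate from the form of $\widetilde\Phi$. The main obstacle in making this rigorous is the bookkeeping of Sobolev regularities — ensuring $l$ is large enough that $\exp(\xi)\cdot(\delb_0+\Phi(b))$ makes sense, that the curvature map is a well-defined smooth map of Banach spaces between the relevant Sobolev completions, and that elliptic regularity upgrades the solution — together with checking that the IFT hypotheses ($C^1$ dependence on $\uep$, the precise mapping properties of $P^\perp_\k\circ\Delta_{A_{0,\End}}$) hold; the conceptual heart, namely that the linearisation is an isomorphism on $\k^\perp$, is a direct consequence of Lemma \ref{lem:linop} and the polystability of $\Gr(E)$.
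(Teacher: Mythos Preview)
Your proposal is correct and follows essentially the same route as the paper: gauge-perturb $\delb_0+\Phi(b)$ by $\exp$ of a hermitian (equivalently, in your convention, skew-hermitian) section, apply the implicit function theorem with linearisation $\Delta_{A_0,\End}$ from Lemma~\ref{lem:linop}, and kill the cokernel by projecting onto $\k^\perp$; statement (1) is then tautological because $\widetilde\Phi$ differs from $\Phi$ by a complex gauge transformation, and (2) is exactly what the projection arranges. The only cosmetic difference is that the paper works with $s\in\Gamma(\End_H)^l$ and projects the target onto $(i\k)^\perp$, whereas you restrict the domain variable $\xi$ to $\k^\perp$ from the outset --- these are the two standard equivalent ways to set up the IFT in the presence of a finite-dimensional (co)kernel, and your brief detour worrying whether $\xi(0,b)=0$ is unnecessary since, as you then note, gauge equivalence holds for all $(\uep,b)$ by construction.
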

\begin{remark}
 By elliptic regularity, elements in the image of $\widetilde \Phi$ will actually be smooth. However, regularity of the map $\widetilde \Phi$ is with respect to the $L^{2,l}$ Sobolev norm. 
\end{remark}
To ease notations, we will use $\Lambda_{\uep}$ for the Lefschetz operator of $\om_{\uep}$. We also introduce the topological constants 
\begin{eqnarray*}
c_{\uep} = \frac{2\pi n}{\mathrm{vol}_{\omega_{\uep}}(X)}\frac{\left( c_1(\Gr(E))\cup [\omega_{\uep}]^{n-1}\right)[X]}{\rk(\Gr(E))}.
\end{eqnarray*}

\begin{proof}
 For $b\in B$, we will denote by $A_b$ the Chern connection of $(\delb_0+  \Phi(b),h_0)$. Note that in particular $A_0$ is a HYM connection on $\Gr(E)$. The aim is to apply the implicit function theorem to perturb $A_b$ along gauge orbits in order to satisfy point $(2)$ of the statement. For $s\in \Gamma(X,\End (\Gr(E)))$ we define
$A_b(s)=A_b^{\exp(s)}$.
  By the regularity of $\Phi$, the assignment $(b,s)\mapsto A_b(s)-A_0$  (resp. $(b,s)\mapsto F_{A_b(s)}$) is smooth from $B\times \Gamma(X,\End (\Gr(E)))^l$ to $\Om^1(X,\End(\Gr(E)))^{l-1}$ (resp. $\Om^2(X,\End(\Gr(E)))^{l-2}$). We deduce that the operator 
 $$
 \begin{array}{cccc}
 \Psi : & U\times B \times \Gamma(X,\End_H(\Gr(E),h_0))^l  & \to & \Gamma(X,\End_H(\Gr(E),h_0))^{l-2} \\
       &       (\uep, b,s) & \mapsto & \Lambda_{\uep} i F_{A_b(s)}-c_{\uep}\Id_{\Gr(E)}
 \end{array}
 $$
 is a $\mathcal{C}^1$ map. As $A_0$ is HYM, we have $\Psi(0)=0$. By Lemma \ref{lem:linop}, its differential in the $s$ direction at zero is given by the Laplace operator $\Delta_{A_0}$ of $A_0$, whose co-kernel is $i\k\subset\Gamma(X,\End_H(\Gr(E),h_0)) $. Then, by a standard projection argument onto some orthogonal complement of $i\k$,  we can apply the implicit function theorem and obtain a $\mathcal{C}^1$ map $(\uep,b)\mapsto s(\uep,b)$ such that $\Psi(b,\uep,s(\uep,b))$ lies in $\k$, and conclude the proof by setting 
$$\widetilde \Phi(\uep,b)=(A_b(\uep,s(\uep,b)))^{0,1}-A_0^{0,1}.
$$
\end{proof}
\subsection{The finite dimensional moment maps}
\label{sec:finitedimmmap}
We will now explain that for each $\uep\in U$, the map 
\begin{equation}
 \label{eq:finitedimmoment}
\begin{array}{cccc}
 \mu_{\uep} : & B & \to & \k \\
           & b &  \mapsto & \Lambda_{\uep} i F_{ A_{\uep,b}} - c_{\uep} \Id_{\Gr(E)}
\end{array}
\end{equation}
is a moment map for the $K$-action on $B$, for suitable symplectic forms $\Om_{\uep}$ on $B$. Recall from \cite{AtBot,donaldson85} that for $\uep\in U$, the gauge action of $\scGC(\Gr(E),h_0)$ on the affine space $\delb_0+\Om^{0,1}(X,\End(\Gr(E)))$ is hamiltonian for the symplectic form given, for $(a,b)\in \Om^{0,1}(X,\End(\Gr(E)))^2$, by 
\begin{equation}
 \label{eq:momentmapdonaldson}
\Om^{D}_{\uep}(a,b)=\int_{X} \trace(a\wedge b^*)\wedge \frac{\om_{\uep}^{n-1}}{(n-1)!},
\end{equation}
with equivariant moment map $\delb \mapsto \Lambda_{\uep} F_{A_{\delb}}$ where $A_{\delb}$ stands for the Chern connection of $(\delb,h_0)$. Here, we identified the Lie algebra of $\scGC(\Gr(E),h_0)$ with its dual by mean of the invariant pairing 
\begin{equation}
 \label{eq:pairing}
\langle s_1,s_2\rangle_{\uep} := \int_{X} \trace(s_1\cdot s_2^*)\; \frac{\om_{\uep}^n}{n!}.
\end{equation}
\begin{remark}
 We used above the Chern correspondence, for $h_0$ fixed, between Dolbeault operators and hermitian connections to express the infinite dimensional moment map picture on the space of Dolbeault operators.
\end{remark}
\begin{proposition}
 \label{prop:symplecticembedding}
 Up to shrinking $U\times B$, for all $\uep\in U$, the map $\delb_0+\widetilde \Phi(\ep,\cdot)$ is a $K$-equivariant map from $B$ to $\delb_0+\Om^{0,1}(X,\End(\Gr(E)))$ whose image is a symplectic submanifold for $\Om^D_{\uep}$.
\end{proposition}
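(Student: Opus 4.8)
The plan is to show that the pullback of $\Om^D_{\uep}$ under $\delb_0+\widetilde\Phi(\uep,\cdot)$ is a nondegenerate $2$-form on $B$, and then to deduce $K$-equivariance from the corresponding property of $\Phi$ together with the construction in Proposition~\ref{prop:perturbingslice}. The starting observation is that at $\uep=0$, $b=0$, the differential of $\delb_0+\widetilde\Phi(0,\cdot)$ at the origin is (up to the gauge correction, which by construction lies tangent to the $\scGC$-orbit and hence in a direction complementary to the harmonic space) the inclusion $V=H^{0,1}(X,\End(\Gr(E)))\hookrightarrow \Om^{0,1}(X,\End(\Gr(E)))$; more precisely, since point~(1) of Proposition~\ref{prop:perturbingslice} says $\delb_0+\widetilde\Phi(\uep,b)$ is gauge equivalent to $\delb_0+\Phi(b)$ on $Z$, and $\widetilde\Phi$ agrees with $\Phi$ to first order at the origin modulo $\mathrm{Im}(\delb_0)$, the image of $d(\delb_0+\widetilde\Phi(0,\cdot))_0$ is transverse to the gauge directions. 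Then I would invoke the standard fact (Atiyah--Bott, Donaldson) that $\Om^D_0$ restricted to the harmonic space $V$ — equivalently, the natural $L^2$-pairing $a\mapsto \int_X \trace(a\wedge \bar a^{\,t})\wedge\om^{n-1}/(n-1)!$ composed with the Hodge-$*$ — is the Kähler form of the $L^2$ metric on $V$, hence nondegenerate. So at the central point the pullback of $\Om^D_0$ is a symplectic form on $T_0B=V$.

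The next step is an open-condition argument: nondegeneracy of a $2$-form is open in the $\mathcal{C}^0$-topology on the form, and the assignment $(\uep,b)\mapsto (\delb_0+\widetilde\Phi(\uep,\cdot))^*\Om^D_{\uep}|_b$ is continuous in $(\uep,b)$ because $\widetilde\Phi$ is $\mathcal{C}^1$ (Proposition~\ref{prop:perturbingslice}), the curvature depends smoothly on the connection, and $\Om^D_{\uep}$ depends continuously on $\uep$ through the volume form $\om_{\uep}^{n-1}$ (the classes $[\om_{\uep}]$ vary polynomially, and $U$ is a small neighbourhood of $0$). Hence, up to shrinking $U\times B$, the pullback form is nondegenerate on all of $B$ for every $\uep\in U$, i.e.\ $\delb_0+\widetilde\Phi(\uep,\cdot)$ is an immersion with symplectic image for $\Om^D_{\uep}$; shrinking $B$ further if needed makes it an embedding. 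For $K$-equivariance: $\Phi$ is $K$-equivariant by Proposition~\ref{prop:firstslice}, the gauge correction $s(\uep,b)$ produced in the proof of Proposition~\ref{prop:perturbingslice} was obtained by the implicit function theorem applied to a $K$-equivariant operator $\Psi$ with $K$-invariant splitting off $i\k$, so $s(\uep,\cdot)$ is $K$-equivariant, and therefore so is $\widetilde\Phi(\uep,\cdot)$ and the resulting map $B\to\delb_0+\Om^{0,1}$; the $K$-action on the target is the restriction of the (Hamiltonian) $\scGC(\Gr(E),h_0)$-action, which preserves $\Om^D_{\uep}$.

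The main obstacle I expect is not nondegeneracy itself but bookkeeping the relation between $\widetilde\Phi$ and $\Phi$ carefully enough at first order: one must check that the gauge-fixing term $s(\uep,b)$, while nonzero, does not destroy transversality of the image of $d(\delb_0+\widetilde\Phi)$ to the gauge directions — equivalently that $d_b\widetilde\Phi(\uep,\cdot)$ at $b=0$ still has image meeting the kernel of $\Om^D_{\uep}$ (the $\delb_0$-exact forms, modulo $\delb_0^*$-closed) trivially. This follows because on $Z$ the two slices are gauge-equivalent, so infinitesimally $d\widetilde\Phi - d\Phi$ takes values in $\mathrm{Im}(\delb_0)+\mathrm{Im}(\delb_0^*)$ directions that are $\Om^D$-orthogonal to $V$ and do not affect the restriction of the form to the $V$-block; combined with the openness argument this gives nondegeneracy on a possibly smaller neighbourhood. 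A secondary point to be careful about is uniformity in $\uep$: since $U$ is already compactly contained in the locus where $[\om_{\uep}]$ is Kähler, all the estimates (bounds on $\widetilde\Phi$, on curvature, on the inverse of the central symplectic form) are uniform, so a single shrinking of $U\times B$ suffices.
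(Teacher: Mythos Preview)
Your argument is correct and shares the same skeleton as the paper's (nondegeneracy at the central point plus an openness argument, equivariance from the construction of $s(\uep,b)$), but the paper takes a shorter route that sidesteps precisely the bookkeeping you flag as the ``main obstacle''. Rather than analyse $\widetilde\Phi$ directly, the paper first works with the \emph{unperturbed} Kuranishi map $\Phi$: since $\Phi$ is holomorphic with injective differential at the origin, $\Phi(B)$ is a complex submanifold of the space of Dolbeault operators, and the restriction of the K\"ahler form $\Om^D_0$ to any complex submanifold is automatically nondegenerate. Only then does the paper pass to $\widetilde\Phi(\uep,\cdot)$, viewed as a $\mathcal{C}^1$-small perturbation of $\Phi$, and invoke openness of the symplectic-embedding condition. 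This avoids having to track the gauge-fixing term $s(\uep,b)$ at all.

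Your handling of that term is essentially fine, though your phrasing ``do not affect the restriction of the form to the $V$-block'' is slightly imprecise: $\Om^D$-orthogonality of the gauge directions to $V$ kills the cross terms in the pullback, but the gauge--gauge term $\Om^D_0(g(v),g(w))$ could in principle survive. What actually happens is that $d_b s|_{(0,0)}=0$ (a short computation using the K\"ahler identities shows $\Lambda_0\, i\, d_b F_{A_b}|_{b=0}$ vanishes on harmonic directions), so $d_b\widetilde\Phi(0,\cdot)|_0=d\Phi_0$ exactly; alternatively, as you note, smallness of $g$ plus openness already suffices. Either way the paper's complex-submanifold observation makes this entire discussion unnecessary.
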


\begin{proof}
The equivariance follows easily from Proposition \ref{prop:firstslice} and from the construction of $\widetilde \Phi$ in the proof of Proposition \ref{prop:perturbingslice}. The map $\widetilde \Phi $ is obtained by perturbing $\Phi$. But $\Phi$ is complex analytic with, by construction, injective differential at the origin (see e.g. the orginal proof \cite{kuranishi} or \cite{doan}).  Thus $\Phi(B)$ is a complex subspace of $\Om^{0,1}(X,\End(\Gr(E)))$. We deduce that, up to shrinking $B$, $\Phi$ induces an embedding of $B$ such that the restriction of $\Om^D_0$ to $\Phi(B)$ is non-degenerate (recall that $\Om^D_0$ is a K\"ahler form on the space of Dolbeault operators on $X$). As $\widetilde \Phi(\uep,\cdot)$ is obtained by a small and continuous perturbation of $\Phi$, and as being a symplectic embedding is and open condition, the result follows.
\end{proof}
From this result, we deduce that the map $\mu_{\uep}$ defined in (\ref{eq:finitedimmoment}) is a moment map for the $K$-action on $B$ with respect to the pulled back symplectic form 
$$\Om_{\uep}:=\widetilde \Phi(\ep,\cdot)^*\Om^D_{\uep},
$$
and where we use the pairing $\langle \cdot,\cdot \rangle_{\uep}$ defined in (\ref{eq:pairing}) to identify $\k$ with its dual.

We now assume that $\Gr(E)$ is the graded object of a simple, semi-stable and sufficiently smooth holomorphic vector bundle $E$ on $(X,[\om])$. As $E$ is built out of successive extensions of the stable components $\cG_i$'s of $\Gr(E)$, the Dolbeault operator $\delb_E$ on $E$ is given by
$$
\delb_E=\delb_0+\gamma
$$
where $\gamma \in\Om^{0,1}(X,\Gr(E)^*\otimes \Gr(E))$ can be written
$$
\gamma=\sum_{i<j} \gamma_{ij}
$$
for (possibly vanishing) $\gamma_{ij} \in\Om^{0,1}(X,\cG_j^* \otimes \cG_i)$. Elements 
$$g:=g_1\Id_{\cG_1}+\ldots,+g_\ell \Id_{\cG_\ell}\in G,$$
for $(g_i)\in(\C^*)^\ell$, act on $\delb_E$ and produce isomorphic holomorphic vector bundles in the following way :
\begin{equation}
\label{eq:explicitaction}
g\cdot \delb_E= \delb_0 + \sum_{i<j} g_ig_j^{-1}\gamma_{ij}.
\end{equation}
In particular, for $g=(t^\ell,t^{\ell-1},\ldots,t)$, letting $t\mapsto 0$, we can see $E$ as a small complex deformation of $\Gr(E)$.
By Proposition \ref{prop:firstslice}, the holomorphic connection $\delb_E$ is gauge equivalent to an element $\delb_b:=\delb_0+ \Phi(b)$ for some $b\in B$. Then, from properties of the maps $\Phi$ and $\widetilde \Phi$, for all $\uep\in U$ and for all $g\in G$, $\delb_E$ will be gauge equivalent to $\delb_0+\widetilde \Phi(\uep,g\cdot b)$, provided $g\cdot b \in B$. As a zero of $\mu_{\uep}$ corresponds to a HYM connection on $(X,\om_{\uep})$, we are left with the problem of characterising the existence of a zero for $\mu_{\uep}$ in the $G$-orbit of $b$.

\section{Proof of the main results}
\label{sec:proofmain}
We carry on with notations from the last section, and our goal now is to prove Proposition \ref{prop:intro} and Theorem \ref{theo:intro}. This is where we will need to assume that $G=\Aut(\Gr(E))$ is abelian. As $\Gr(E)=\bigoplus_{i=1}^\ell \cG_i$ this is equivalent to the fact that all stable components $\cG_i$ are non isomorphic, and this gives the explicit description 
$$
\g=\aut(\Gr(E))=\bigoplus_{i=1}^\ell \C\cdot\Id_{\cG_i}.
$$
Up to shrinking $U$, we will assume that the set $\lbrace [\om_{\uep}],\; \uep\in U\rbrace$  is a ball $B([\om],R)\subset \cK_X$. By definition of the set $\cS_u$ in  Section \ref{sec:intro}, it is clear that for any $[\om']\in \cS_{u,R}$, $E$ will be unstable with respect to $[\om']$. In the sequel, we will first show that, up to shrinking $R$, $E$ admits HYM connections with respect to all classes in $\cS_{s,R}$. Together with simplicity of $E$, this will imply that $E$ is stable for any class in $\cS_{s,R}$. Then, we will explore the convergence properties of the constructed HYM connections to conclude proof of Theorem \ref{prop:intro}. Finally, we will settle the semi-stable case, for classes in $\cS_{ss,R}$, and the proof of Proposition \ref{prop:intro} will be complete.

\subsection{The local convex cone associated to the $K$-action}
\label{sec:reducing}
Our first goal is to show that for any $\uep\in U$ such that $[\om_{\uep}]\in \cS_{s,R}$, 
there is a zero of $\mu_{\uep}$ in $$\cZ:=G\cdot b\cap B.$$
In order to do so, we start by describing, at least locally, the images of $\cZ$ by the maps $(\mu_{\uep})_{\uep\in U}$. In this section, relying on \cite{SekTip22}, we will see that those images all contain translations of (a neighbourhood of the apex of) the same convex cone.

By simplicity of $E$, the stabiliser of $b$ under the $K$-action is reduced to the $S^1$-action induced by gauge transformations of the form $e^{i\theta}\Id_E$. As those elements fix all the points in $B$, elements in $S^1\cdot \Id_E$ will play no role in the arguments that follow. Hence, we will work instead with the quotient torus $K_0:=K/S^1\cdot \Id_E$. Note that the constants $c_{\uep}$ that appear in the maps $\mu_{\ep}$ in (\ref{eq:finitedimmoment}) are chosen so that $\langle \mu_{\uep}, \Id_E \rangle_{\uep}=0$. As the $\mu_{\uep}$ take values in $\k$, this is equivalent to say $\trace(\mu_{\uep})=0$. Hence, setting $\k_0\subset \k$ to be the set of trace free elements in $\bigoplus_{i=1}^\ell i\R \cdot \Id_{\cG_i}$, we will consider the family of moment maps $\mu_{\uep}:B\to\k_0$ for the $K_0$-action, and we may, and will, assume that the stabiliser of $b$ is trivial. Using the inner product $\langle \cdot, \cdot \rangle_{\uep}$ to identify $\k_0\simeq\k_0^*$, we can see the maps $\mu_{\uep}$ as taking values in $\k_0^*$ :
 $$
 \mu_{\ep}^* : B \to \k_0^*.
 $$
  There is a weight decomposition of $V$ under the abelian $K$-action
 \begin{equation}
  \label{eq:weightdecomposition}
 V:= \bigoplus_{m\in M} V_m
 \end{equation}
 for $M\subset \k_0^*$ the lattice of characters of $K_0$. In the matrix blocks decomposition of $V=H^{0,1}(X,\End(\Gr(E)))$ induced by $\Gr(E)=\bigoplus_{i=1}^\ell\cG_i$, using the product hermitian metric $h_0$, we have 
 $$
 V=\bigoplus_{1\leq i,j\leq \ell} H^{0,1}(X,\cG_i^*\otimes\cG_j).
 $$
 The action of $g=(g_1,\ldots,g_\ell)\in K_0\simeq (\C^*)^\ell$ on $\gamma_{ij}\in V_{ij}:=H^{0,1}(X,\cG_i^*\otimes\cG_j)$ is given by:
 \begin{equation}
 \label{eq:action2}
 g\cdot \gamma_{ij}=g_ig_j^{-1} \gamma_{ij}.
 \end{equation}
Thus, in the weight space decomposition (\ref{eq:weightdecomposition}), $V_{ij}$ is the eigenspace with weight 
 \begin{equation}
  \label{eq:weightsmij}
 m_{ij}:= (0,\ldots,0,1,0,\ldots, 0, -1,0,\ldots,0)
 \end{equation}
 where $+1$ appears in $i$-th position and $-1$ in the $j$-th position. If we decompose $b$ accordingly as
 \begin{equation}
  \label{eq:indicicesdecompositionb}
 b=\sum_{ij} b_{ij},
 \end{equation}
 where $b_{ij}\in V_{ij}$ is non zero, as $\delb_E=\delb_0 +\gamma$ with $\gamma$ upper triangular, or equivalently as $E$ is obtained as successive extensions of the stable components $\cG_i$'s, only indices $(i,j)$ with $i < j$ will appear in (\ref{eq:indicicesdecompositionb}).
From now on, we will restrict our setting to
 $$
 B\cap \bigoplus_{b_{ij}\neq 0} V_{ij},
 $$
which we still denote by $B$. That is, we only consider weight spaces that appear in the decomposition of $b$. Similarily, we use the notation $V$ for $\bigoplus_{b_{ij}\neq 0} V_{ij}$.

To sum up, we are in the following setting :
\begin{enumerate}
 \item[($R_1$)] The compact torus $K_0$ acts effectively and holomorphically on the complex vector space $V$;
 \item[($R_2$)] There is a continous family of symplectic forms $(\Om_{\uep})_{\uep\in U}$ on $B\subset V$ around the origin, with respect to which the $K_0$-action is hamiltonian;
 \item[($R_3$)] The point $b\in B$ has trivial stabiliser, $0$ in its $K_0^\C$-orbit closure, and for all weight $m_{ij}\in M$ appearing in the weight space decomposition of $V$, $b_{ij}\neq 0$.
 \item[($R_4$)] The restriction of the symplectic form $\Om_0$ to the $K_0^\C$-orbit of $b$ is non-degenerate.
\end{enumerate}
This last point follows as in the proof of Proposition \ref{prop:symplecticembedding}.
We set $$\ocZ:=B\cap(\overline{K_0^\C\cdot b}).$$
We also introduce 
$$
\sigma:= \sum_{b_{ij}\neq 0} \R_+ \cdot m_{ij}\subset \k_0^*
$$
with $\lbrace m_{ij},\: b_{ij}\neq 0 \rbrace$ the set of weights that appear in the decomposition of $b\in V$,
and for $\eta >0$ 
$$
\sigma_\eta := \sum_{b_{ij}\neq 0} [0,\eta ) \cdot m_{i j}\subset \k_0^*.
$$
Note that by the local version of Atiyah and Guillemin--Sternberg's convexity theorem, there exists $\eta >0$ such that $\mu_{\uep}^*(0)+\sigma_\eta\subset\mu_{\uep}^*(B)$ for all $\uep$ small enough (see the equivariant Darboux Theorem \cite[Theorem 3.2]{Dwivedi} combined with the local description of linear hamiltonian torus actions \cite[Section 7.1]{Dwivedi}). By \cite[Proposition 4.6]{SekTip22}, the properties $(R_1)-(R_4)$ listed above actually imply :
 \begin{proposition}
 \label{prop:sigma-eta-image-orbit}
  Up to shrinking $U\times B$, there exists $\eta>0$ such that for all $\uep\in U$,
  $$
   \mu_{\uep}^*(0)+ \Int(\sigma_\eta)  \subset \mu^*_{\uep}(\cZ)
  $$
  and 
  $$
  \mu_{\uep}^*(0)+ \sigma_\eta  \subset \mu^*_{\uep}(\ocZ).
  $$
 \end{proposition}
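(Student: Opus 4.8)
Once the structural hypotheses $(R_1)$–$(R_4)$ are in hand — and they have been verified above — the statement is \cite[Proposition~4.6]{SekTip22}; the argument behind it that I would reproduce is as follows. The plan is to pass to a linear model near the origin, settle the two inclusions there by a direct computation with the weights $\{m_{ij}\ :\ b_{ij}\neq 0\}$, and then propagate the result to the perturbed, $\uep$-dependent picture by an openness argument together with compactness in the parameter. First I would use the $K_0$-equivariant Darboux theorem (the one already invoked above to obtain $\mu_{\uep}^*(0)+\sigma_\eta\subset\mu_{\uep}^*(B)$): for each $\uep$, and continuously in $\uep$, it identifies a $K_0$-invariant neighbourhood of $0$ in $(B,\Om_{\uep})$ with one in $(V,\Om^{\mathrm{lin}})$, where $\Om^{\mathrm{lin}}$ is the standard linear $K_0$-invariant symplectic form, under which $\mu_{\uep}^*=\mu_{\uep}^*(0)+\mu^{\mathrm{lin}}+(\text{higher order})$ with $\mu^{\mathrm{lin}}(v)=\sum_m\|v_m\|^2\,m$ the standard quadratic moment map — here $(R_1)$ ensures the $m_{ij}$ span $\k_0^*$, so $\mu^{\mathrm{lin}}$ is the genuine quadratic moment map — and which carries $b$, $\cZ$, $\ocZ$ to objects $C^1$-close to a point of $V$ with all weight-components nonzero, the open $K_0^\C$-orbit through it, and its closure, respectively (triviality of $\mathrm{Stab}(b)$ from $(R_3)$ identifies that orbit with $(\C^*)^{\dim K_0}$).

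In the \emph{exact} linear model the computation is elementary. Parametrising the orbit by $\xi\in\k_0$ via $v=\exp(i\xi)\cdot b$ one has $v_m=e^{\langle m,\xi\rangle}b_m$, so
\[
\mu^{\mathrm{lin}}(\exp(i\xi)\cdot b)=\sum_m e^{2\langle m,\xi\rangle}\|b_m\|^2\,m=\nabla_\xi F(\xi),\qquad F(\xi):=\tfrac12\sum_m e^{2\langle m,\xi\rangle}\|b_m\|^2 .
\]
All $b_m\neq 0$ and the $m$ span $\k_0^*$, so $F$ has positive-definite Hessian and $\nabla F:\k_0\to\k_0^*$ is a proper local diffeomorphism onto the interior of the cone $\sigma=\sum_m\R_+ m$; pointedness of $\sigma$ is where the extension (triangular) structure enters, since $m_{ij}=e_i-e_j$ with $i<j$ and a nonnegative relation among such vectors is a circulation on an acyclic graph, hence trivial. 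This is the familiar identification of the moment image of a single open torus orbit with the interior of its weight cone; running it on each stratum of the orbit closure — the $K_0^\C$-orbit of the point obtained from $b$ by deleting the components outside a face $\tau$ of $\sigma$ — produces the relative interior of $\tau$, and the union over all faces gives $\mu^{\mathrm{lin}}(\overline{K_0^\C\cdot b})=\sigma$, $\mu^{\mathrm{lin}}(K_0^\C\cdot b)=\Int(\sigma)$. Since $\ocZ$ meets only a small ball and the $\mu^{\mathrm{lin}}$-fibres on the orbit closure shrink to $\{0\}$ as the value tends to $0$, this yields $\mu_0^*(0)+\sigma_\eta\subset\mu_0^*(\ocZ)$ and $\mu_0^*(0)+\Int(\sigma_\eta)\subset\mu_0^*(\cZ)$ for some $\eta>0$.

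It remains to make this uniform in $\uep$. The discrepancies between $\Om_{\uep}$ and the linear model, and between $\ocZ$ and the model orbit closure, are $C^1$-small, controlled by $\|\uep\|$ and the radius of $B$; since ``a given point lies strictly inside $\mu^*_{\uep}(\ocZ)$'' is an open condition on these data, the inclusions above persist, with $\eta$ shrunk to $\eta/2$ say, for all $\uep$ in a neighbourhood of $0$, and a covering argument after shrinking $U$ to a compact neighbourhood yields a single $\eta>0$ valid throughout. Here $(R_4)$ plays the role that the non-degeneracy in Proposition~\ref{prop:symplecticembedding} played there.

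I expect the uniform perturbation step to be the real obstacle. The equivariant Darboux chart is only a symplectomorphism, not a biholomorphism, so it does \emph{not} carry the complex orbit $K_0^\C\cdot b$ onto the model one; one must instead argue directly that the moment image of the genuine orbit closure $\ocZ$ — not merely of the ambient ball $B$, for which the weaker inclusion was already available from the local Atiyah–Guillemin–Sternberg theorem — stays $C^0$-close to $\mu_{\uep}^*(0)+\sigma$, uniformly in $\uep$. Property $(R_4)$ is exactly what lets one treat each orbit stratum as a bona fide Hamiltonian $K_0$-manifold and run a local convexity argument there; assembling the strata while keeping every estimate uniform in $\uep$ is the technical heart of the proof, and is carried out in \cite{SekTip22}.
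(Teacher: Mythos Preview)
Your proposal is correct and matches the paper's own treatment: the paper gives no independent proof here but simply invokes \cite[Proposition~4.6]{SekTip22} after recording $(R_1)$–$(R_4)$, with a remark that the $\Int(\sigma_\eta)$ inclusion, while not stated explicitly there, follows from the opening of that proof. Your sketch of the underlying argument (equivariant Darboux linearisation, the explicit quadratic moment map and its gradient-of-convex-function description, the acyclic-graph reason for pointedness of $\sigma$, and the uniform-in-$\uep$ perturbation step via $(R_4)$) is accurate and in fact more detailed than what the paper provides; note also that the paper later uses exactly the linear-model formula you wrote down, as Equation~(\ref{eq:expressionmuep}).
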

 \begin{remark}
 The fact that the interior of $ \mu_{\uep}^*(0)+ \sigma_\eta$ is included in the image of the $K_0^\C$-orbit of $b$ by $\mu_{\uep}^*$ is not stated explicitely in \cite{SekTip22}, but follows from the discussion at the beginning of the proof of \cite[Proposition 4.6]{SekTip22}.
 \end{remark}

\subsection{Finding zeros of $\mu_{\uep}$ when $[\om_{\uep}]\in\cS_{s,R}$}
\label{sec:solving}
We now assume that $[\om_{\uep}]\in\cS_{s,R}$. Note that from the definition of $\cS_{s,R}$, this is equivalent to the fact that for any $\cF\in\mathfrak{E}_{[\omega]}$, 
$$
\mu_{L_{\uep}}(\cF)<\mu_{L_{\uep}}(E),
$$
where we recall $L_{\uep}=[\om_{\uep}]$.
From Proposition \ref{prop:sigma-eta-image-orbit}, to find a zero of $\mu_{\uep}$ in $\cZ$ it is enough to show $-\mu_{\uep}^*(0)\in \Int(\sigma_\eta)$, which reduces to $-\mu_{\uep}^*(0)\in \Int(\sigma)$ for small enough $\uep$. Arguing as in \cite[Lemma 4.8]{SekTip22}, $\sigma$ and its dual $$\sigma^\vee:=\lbrace v\in \k_0\:\vert\: \langle m, v \rangle\geq 0\; \forall m\in\sigma \rbrace$$ are strongly convex rational polyhedral cones of dimension $\ell-1$. Note that here the pairing $\langle \cdot,\cdot \rangle$ is the natural duality pairing. By duality, $\sigma=(\sigma^\vee)^\vee$, and we are left with the condition
$$
-\mu_{\uep}^*(0)\in \Int((\sigma^\vee)^\vee).
$$
The cone $\sigma^\vee$ can be written 
$$
\sigma^\vee=\sum_{\underline{a}\in\cA} \R_+\cdot v_{\underline{a}} 
$$
for a finite set of generators $\lbrace v_{\underline{a}} \rbrace_{\underline{a}\in\cA}\subset \k_0$. Hence, our goal now is to show that for all $\ua\in\cA$, $\langle \mu_{\uep}^*(0), v_{\ua} \rangle < 0$, which by construction is equivalent to
\begin{equation}
 \label{eq:toprove}
\langle \mu_{\uep}(0), v_{\ua} \rangle_{\uep}< 0,
\end{equation}
under the assumption that for any $\cF\in\mathfrak{E}_{[\omega]}$, 
\begin{equation}
\label{eq:wehave} 
\mu_{L_{\uep}}(\cF)<\mu_{L_{\uep}}(E).
\end{equation}
We will then study in more detail Equations (\ref{eq:toprove}) and (\ref{eq:wehave}). In order to simplify  notation, in what follows, we will assume that all the stable components of $\Gr(E)$ have rank one, so that $\trace(\Id_{\cG_i})=1$ for $1\leq i \leq \ell$. The general case can easily be adapted, and is left to the reader.

\subsubsection{Condition (\ref{eq:toprove}) : generators of the dual cone}
We will give here a more precise form for the generators $\lbrace v_{\ua}\rbrace_{\ua\in\cA}$ of $\sigma^\vee$. Recall from \cite[Section 1.2]{fulton} the method to find such generators : as $\sigma$ is $\ell-1$-dimensional, each of its facets is generated by $\ell-2$ elements amongst its generators $(m_{ij})$. Then, a generator $v_{\ua}$ for $\sigma^\vee$ will be an ``inward pointing normal'' to such a facet. Hence, if $$v_{\ua}=\sum_{i=1}^\ell a_i \Id_{\cG_i}$$ is a generator of $\sigma^\vee$, there exists a set $\cS:=\lbrace m_{ij} \rbrace$ of $\ell-2$ generators of $\sigma$  such that 
$$
\forall\; m_{ij}\in\cS,\: \langle m_{ij}, v_{\ua} \rangle=0.
$$
Moreover, $v_{\ua}\in \k_0$ should be trace free, and as we assume here $\rank(\cG_i)=1$ for all stable components, it gives 
$$
\sum_{i=1}^\ell a_i =0.
$$
\begin{lemma}
 \label{lem:dualgenerators}
 Up to scaling $v_{\ua}$, there exists a partition $\lbrace 1,\ldots,\ell \rbrace =I^-\cup I^+$ such that for all $i\in I^-$, $a_i=-\frac{1}{\sharp I^-}$ and for all $i\in I^+$, $a_i=\frac{1}{\sharp I^+}$, where $\sharp$ stands for the cardinal of a set.
\end{lemma}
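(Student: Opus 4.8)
The plan is to analyze the linear system satisfied by a generator $v_{\ua} = \sum_{i=1}^\ell a_i \Id_{\cG_i}$ of $\sigma^\vee$ directly from the combinatorial constraints. Recall that such a generator is determined (up to scaling) by the requirements that $\langle m_{ij}, v_{\ua}\rangle = 0$ for all $m_{ij}$ in a distinguished collection $\cS$ of $\ell-2$ facet-generators of $\sigma$, together with the trace-free condition $\sum_i a_i = 0$. Since $\langle m_{ij}, v_{\ua}\rangle = a_i - a_j$ by the explicit form \eqref{eq:weightsmij} of the weights, the condition $\langle m_{ij}, v_{\ua}\rangle = 0$ simply says $a_i = a_j$. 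Thus each $m_{ij}\in\cS$ forces two coordinates of $v_{\ua}$ to agree, and the problem reduces to understanding the partition of $\{1,\ldots,\ell\}$ into the equivalence classes generated by these coincidences.

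First I would consider the graph $\Gamma$ on vertex set $\{1,\ldots,\ell\}$ whose edges are the pairs $(i,j)$ with $m_{ij}\in\cS$. Since $\cS$ has $\ell-2$ elements, $\Gamma$ has $\ell$ vertices and $\ell-2$ edges, hence at least two connected components. On each connected component the function $i\mapsto a_i$ must be constant; so $v_{\ua}$ is constant on each component, and $v_{\ua}$ lies in the subspace of dimension equal to the number of components. For $v_{\ua}$ to span a ray (a one-dimensional face of $\sigma^\vee$) rather than a higher-dimensional face, the trace-free hyperplane must cut this subspace down to a line, which forces the number of connected components to be exactly two; moreover $\Gamma$ must then be a forest (two trees), since $\ell-2$ edges on $\ell$ vertices with two components leaves no room for a cycle. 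Call the two components $I^-$ and $I^+$. Writing $a_i = a^-$ for $i\in I^-$ and $a_i = a^+$ for $i\in I^+$, the trace-free condition gives $\sharp I^- \cdot a^- + \sharp I^+\cdot a^+ = 0$, so $(a^-, a^+)$ is proportional to $(-1/\sharp I^-,\, 1/\sharp I^+)$ (up to an overall sign and scaling, which we fix by the naming of $I^\pm$). Rescaling $v_{\ua}$ accordingly yields the claimed formula $a_i = -1/\sharp I^-$ for $i\in I^-$ and $a_i = 1/\sharp I^+$ for $i\in I^+$.

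The main obstacle I anticipate is justifying rigorously that a \emph{genuine} one-dimensional face of $\sigma^\vee$ corresponds to a choice of $\cS$ for which the associated graph has exactly two components forming a forest; a priori one only knows $\cS$ is a set of $\ell-2$ generators of a facet of $\sigma$, and one must rule out that these coincidence constraints are redundant (giving fewer than $\ell-2$ independent equations, hence a face of dimension $\geq 2$) or overdetermined in a way incompatible with being a facet. This should follow from the general theory recalled from \cite[Section 1.2]{fulton}: a ray of $\sigma^\vee$ is the intersection of $\sigma^\vee$ with a supporting hyperplane spanned by $\ell-2$ linearly independent generators of $\sigma$ lying in a common facet, and linear independence of the $m_{ij}$ in $\cS$ translates precisely into the graph $\Gamma$ being acyclic. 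One should also check $v_{\ua}\in\sigma^\vee$, i.e.\ $\langle m_{ij}, v_{\ua}\rangle = a_i - a_j \geq 0$ for \emph{all} weights $m_{ij}$ appearing in $\sigma$ (not just those in $\cS$); this holds because $a_i - a_j \in \{0,\ \tfrac{1}{\sharp I^+} + \tfrac{1}{\sharp I^-}\}$ depending on whether $i,j$ lie in the same class or in $I^+, I^-$ respectively — but here one must use that the partition is compatible with the ordering constraints coming from which weights $m_{ij}$ (with $i<j$) actually occur, which is exactly the content hidden in the phrase ``up to scaling'' and may require a sign choice putting the ``source'' block in $I^+$. I would handle this compatibility as the one delicate point and treat everything else as bookkeeping on the graph $\Gamma$.
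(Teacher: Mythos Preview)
Your proposal is correct and reaches the same conclusion as the paper, but via a cleaner route. Both arguments build the same graph $\Gamma$ on $\{1,\ldots,\ell\}$ with an edge for each $m_{ij}\in\cS$, and both aim to show $\Gamma$ has exactly two connected components. The paper argues combinatorially: from $m_{ij}+m_{jk}=m_{ik}$ it extracts the rule ``if $(i,j)$ and $(j,k)$ are edges then $(i,k)$ is not'', and then runs an induction on the number of edges leaving a fixed vertex to force two components. You instead invoke the standard fact that a set $\{m_{ij}\}$ of root-type vectors is linearly independent precisely when the associated graph is acyclic; since the $\ell-2$ facet generators in $\cS$ must be linearly independent (they span an $(\ell-2)$-dimensional facet), $\Gamma$ is a forest, and a forest on $\ell$ vertices with $\ell-2$ edges has exactly two trees. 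Your approach is more conceptual and shorter, and it also makes transparent \emph{why} $\cS$ can be taken of size exactly $\ell-2$ with independent elements, a point the paper's phrasing (``each of its facets is generated by $\ell-2$ elements'') leaves somewhat implicit. The paper's argument, on the other hand, is entirely self-contained and avoids appealing to the graph--matroid correspondence. Your closing remarks about the sign choice and the verification that $v_{\ua}$ actually lies in $\sigma^\vee$ go slightly beyond what the lemma asserts; these points are handled in the paper only in the subsequent Section~4.2.2, so you are right to flag them but they are not needed for the lemma itself.
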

\begin{proof}
 The key is to observe that if $m_{ij},m_{jk}\in \cS^2$, then $m_{ik}\notin \cS$. Indeed, by (\ref{eq:weightsmij}), $m_{ij}+m_{jk}=m_{ik}$, and those are generators of the cone. Equivalently, if $m_{ij},m_{ik}\in\cS^2$, then $m_{jk}\notin \cS$. We then assign an oriented graph $G_{\ua}$ to $v_{\ua}$. The vertices are labelled $a_1$ to $a_\ell$, and we draw an oriented edge from $a_i$ to $a_j$ if $a_i=a_j$ and $i<j$. For each $m_{ij}\in\cS$, $\langle m_{ij}, v_{\ua} \rangle =0$ gives $a_i=a_j$. Hence, $G_{\ua}$ has at least $\ell-2$ edges. To prove the result, it is enough to show that $G_{\ua}$ has $2$ connected components. Indeed, we can then set $I^-=\lbrace i\,\vert a_i <0 \rbrace$ and $I^+=\lbrace i\,\vert a_i >0 \rbrace$. All elements $a_i$ for $i\in I^-$ will correspond to the same connected component and be equal, and similarily with $i\in I^+$. As $\sum_{i=1}^\ell a_i =0$, we obtain the result by rescaling.
 
 Proving that $G_{\ua}$ has two connected components is then routine. It has $\ell$ vertices and $\ell-2$ oriented edges, with the rule that if there is an edge from $a_i$ to $a_j$ and an edge from $a_i$ to $a_k$, then there is no edge from $a_j$ to $a_k$. We consider the number of edges that start from $a_1$. If there are $\ell-2$ of those, then the connected component of $a_1$ has at least $\ell-1$ vertices, and we are left with at most $1$ singleton for the other component. The fact that $v_{\ua}$ is trace free imposes that there are at least $2$ connected components, and we are done in that case. Then, if there are $\ell-2-k$ edges from $a_1$, its connected component has at least $\ell-1-k$ elements, and we are left with at most $k+1$ vertices and $k$ edges for the other components. But its easy to show, by induction on $k$, that the rule stated above implies that there will be at most $1$ connected component for such a graph with $k+1$ vertices and $k$ edges, and we are done.
\end{proof}
We can now translate condition (\ref{eq:toprove}). By Lemma \ref{lem:dualgenerators}, it is equivalent to 
\begin{equation}
 \label{eq:toprove2}
 \frac{\sum_{i\in I^+}\langle \mu_{\uep}(0),\Id_{\cG_i}\rangle_{\uep}}{\sharp I^+}  < \frac{\sum_{i\in I^-}\langle \mu_{\uep}(0),\Id_{\cG_i}\rangle_{\uep}}{\sharp I^-}.
\end{equation}

\subsubsection{Condition (\ref{eq:wehave}) : one parameter degenerations}
We will associate to each generator $v_{\ua}$ of $\sigma^\vee$ a subsheaf $\cF\in\mathfrak{E}_{[\omega]}$. Geometrically, the idea is that $v_{\ua}\in\k_0$ generates a one-parameter subgroup of $K_0$ and a degeneration of $E$ to $\cF\oplus E/\cF$, to which is assigned the Hilbert--Mumford weight $\mu_{L_{\uep}}(\cF)-\mu_{L_{\uep}}(E)<0$. We let $v_{\ua}=\sum_{i=1}^{\ell} a_i \Id_{\cG_i}\in\sigma^\vee$ a generator as above, and define
$$
\cF_{\ua}=\bigoplus_{i\in I^+} \cG_i,
$$
as a {\it smooth} complex vector bundle, and will show that $\delb_E (\cF_{\ua})\subset \Om^{0,1}(X,\cF_{\ua})$. This implies that $\cF_{\ua}\in \mathfrak{E}_{[\omega]}$ as a {\it holomorphic} vector bundle, with Dolbeault operator the restriction of $\delb_E$. Recall that $\delb_E=\delb_0 +\gamma=\delb_0+ \sum_{b_{ij}\neq 0} \gamma_{ij}$, that is, by choice of $b$, the weights that appear in the weight decomposition of $\gamma$ are the same as those that appear in the decomposition of $b$. In the matrix blocks decomposition given by $\bigoplus_{i=1}^\ell\cG_i$, the operator $\delb_0$ is diagonal, and thus sends $\cF_{\ua}$ to $\Om^{0,1}(X,\cF_{\ua})$. We need to show that for each $j\in I^+$, $\gamma (\cG_j)\subset \Om^{0,1}(X,\cF_{\ua})$. As $v_{\ua}\in\sigma^\vee$, it satisfies, for any generator $m_{ij}$ of $\sigma$ :
$$
\langle m_{ij}, v_{\ua} \rangle \geq 0,
$$
that is, for all $(i,j)$ with $i<j$ and $b_{ij}\neq 0$,
$$
a_i - a_j \geq 0.
$$
As $j\in I^+$, this implies $a_i\geq a_j > 0$. Hence, if $i < j$ is such that $b_{ij}\neq 0$, then $i\in I^+$. Equivalently, for $i<j$, $i\in I^-$ implies $\gamma_{ij}=0$, and thus we see that $\gamma(\cG_j)\subset \Om^{0,1}(X,\cF_{\ua})$, and hence $\delb_E (\cF_{\ua})\subset \Om^{0,1}(X,\cF_{\ua})$. 

Then we have $\cF_{\ua}\in \mathfrak{E}_{[\omega]}$ and Condition (\ref{eq:wehave}) gives
$$
\mu_{L_{\uep}}(\cF_{\ua}) < \mu_{L_{\uep}}(E),
$$
which, by the see-saw property of slopes (see e.g. \cite[Corollary 3.5]{SekTip} ), is equivalent to 
$$
\mu_{L_{\uep}}(\cF_{\ua}) < \mu_{L_{\uep}}(E/\cF_{\ua})
$$
and thus (recall we assume $\rank(\cG_i)=1$):
\begin{equation}
\label{eq:wehave2}
\frac{\sum_{i\in I^+}\mu_{L_{\uep}}(\cG_i)}{\sharp I^+}  <\frac{\sum_{i\in I^-}\mu_{L_{\uep}}(\cG_i)}{\sharp I^-} .
\end{equation}

\subsection{Conclusion}
 By Chern--Weil theory, using the fact that $A_0$ and $ A_{\ep,0}$ are gauge-equivalent by point $(2)$ of Proposition \ref{prop:perturbingslice}, we have 
$$
\begin{array}{ccc}
\mu_{L_{\uep}}(\cG_i) & = & c_1(\cG_i)\cdot [\om_{\uep}]^{n-1} \\
               & = & \displaystyle\frac{1}{2\pi}\langle \mu_{\uep}(0), \Id_{\cG_i} \rangle_{\uep} + \frac{c_{\uep}}{2\pi}\langle \Id_E, \Id_{\cG_i} \rangle_{\uep}.
\end{array}
$$
We also have for all $i$'s, 
$$\langle \Id_E, \Id_{\cG_i} \rangle_{\uep}=\frac{[\om_{\uep}]^n}{n!}.$$
Inequality (\ref{eq:wehave2}) implies Inequality (\ref{eq:toprove2}), which concludes the existence of $b_{\uep}\in \cZ$ such that $\mu_{\uep}(b_{\uep})=0$. Then, by construction, the associated connections $ A_{\uep,b_{\uep}}$ provide HYM connections with respect to $\om_{\uep}$ on bundles gauge equivalent to $E$, where the gauge equivalences are given by elements in the finite dimensional Lie group $\Aut(\Gr(E))$. To show the convergence property of the connections as stated in Theorem \ref{theo:intro}, consider a path $t\mapsto [\om_t]$ in a connected component of $\cS_{s,R}$, converging to $[\om]$ when $t\mapsto 0$. This corresponds to a path $t\mapsto \uep(t)\in U$. It is then enough to show that the connections $ A_{\uep(t),b_{\uep(t)}}$ converge to $A_0= A_{0,0}$ in any $L^{2,l}$ Sobolev norm. By construction of $A_{\uep,b}$ in Proposition \ref{prop:perturbingslice}, it is enough to prove that $b_{\uep(t)}$ converges to $0$ when $\uep(t) \to 0$. Recall from \cite[Theorem 3.2 and Section 7.1]{Dwivedi} that $B$ can be chosen so that $\mu_{\uep}^*$ is given by
\begin{equation}
 \label{eq:expressionmuep}
\mu_{\uep}^*(b')=\mu_{\uep}^*(0)+\sum_{ij} \vert\vert b_{ij}'\vert\vert^2_{\uep} \cdot m_{ij},
\end{equation}
for some norm $\vert\vert \cdot \vert\vert_{\uep}$ that depends continously on $\uep$. As $\mu_{\uep}(0)\underset{\uep\to 0}{\to}\mu_0(0)=0$, the equation $\mu_{\uep}^*(b_{\uep})=0$ implies that for all $(i,j)$, $ \vert\vert (b_{\uep})_{ij}\vert\vert_{\uep} \underset{\uep\to 0}{\to} 0$. As the norms $\vert\vert\cdot\vert\vert_{\uep}$ vary continuously, they are mutually bounded, and thus $b_{\uep(t)}\underset{t\to 0}{\to}0$, which concludes the proof of the convergence property in Theorem \ref{theo:intro}. What remains is the semi-stable case. We need to show that if $[\om_{\uep}]\in\cS_{ss,R}$, $E$ is semi-stable for $[\om_{\uep}]$. The only remaining case to study is when for all $\cF\in\mathfrak{E}_{[\omega]}$,  $\mu_{L_{\uep}}(\cF)\leq \mu_{L_{\uep}}(E)$, with at least one equality. In that situation, the discussion in the last two sections shows that $-\mu_{\uep}(0)\in \sigma$ will lie in the boundary of $\sigma$. Hence, by Proposition \ref{prop:sigma-eta-image-orbit}, there is a boundary point $b'\in \ocZ$ in the orbit closure of $b$ with $\mu_{\uep}(b')=0$. This point corresponds to a HYM connection on a vector bundle that is then polystable for the holomorphic structure given by $ A_{\uep,b'}^{0,1}$, with respect to $L_{\uep}$. As this bundle corresponds to a boundary point in the complex orbit of $b$, it admits a small complex deformation to $E$. As semi-stability is an open condition, we deduce that $E$ is itself semi-stable for $L_{\uep}$.

\begin{remark}
 \label{rem:concludingremark}
 A little adaptation of the previous arguments leads to the following result. Assume that  $t\mapsto [\om_{\uep(t)}]$ is a path in a connected component of $\cS_{s,R}$ that converges to $[\om_{\uep(0)}]\in \cS_{ss,R}$. Then, we can find a filtration
 $$
 0\subset \cF_1\subset \ldots\subset \cF_{l}=E
 $$
 of $E$ by subbundles $\cF\in \mathfrak{E}_{[\omega]}$ such that for $i\in\lbrace 1, \ldots, l\rbrace$, 
 \begin{equation}
  \label{eq:concludingremark}
 \mu_{L_{\uep(0)}}(\cF_i)=\mu_{L_{\uep(0)}}(E)
 \end{equation}
 and $\rank(\cF_{i-1})$ is maximal amongst the ranks of elements $\cF\in\mathfrak{E}_{[\omega]}$ satisfying (\ref{eq:concludingremark}) and $\cF\subset \cF_i$. We then set 
 $$ 
 \cG:=\bigoplus_{i=1}^l \cF_i/\cF_{i-1}.
 $$
 Then, there is a path  $(A_t)_{t\in (0,1]}$ of HYM connections on $E$  with respect to the K\"ahler metrics $(\om_{\uep(t)})_{t\in (0,1]}$ such that $\underset{t\to 0}{\lim} A_t=A_0$ is a HYM connection on $\cG\to (X,[\om_{\uep(0)}])$, that is, the holomorphic connection of $A_0$ is equivalent to the one on $\cG$.
\end{remark}

\bibliographystyle{plain}	
 \bibliography{ClaSekTip}

\end{document}